\newcommand{\preprint}[1]{}
\newcommand{\hide}[1]{}
\numberwithin{equation}{section}
\theoremstyle{plain}
\newtheorem{thm}{Theorem}[section]
\newtheorem{prop}[thm]{Proposition}
\newtheorem{cor}[thm]{Corollary}
\newtheorem{lem}[thm]{Lemma}
\theoremstyle{definition}
\newtheorem{defi}[thm]{Definition}
\theoremstyle{remark}
\newtheorem{rem}[thm]{Remark}
\newcommand{\C}{{\mathscr C}}
\newcommand{\K}{{\mathscr K}}
\newcommand{\fM}{{\mathfrak M}}
\newcommand{\N}{{\mathscr N}}
\newcommand{\NN}{{\mathbb N}}
\newcommand{\PP}{{\mathbb P}}
\newcommand{\W}{{\mathscr W}}
\newcommand{\X}{{\mathscr X}}
\newcommand{\RealNumbers}{{\mathbb R}}
\newcommand{\RR}{{\mathbb R}}
\newcommand{\Integers}{{\mathbb Z}}
\newcommand{\ZZ}{{\mathbb Z}}
\newcommand{\ComplexNumbers}{{\mathbb C}}
\newcommand{\RationalNumbers}{{\mathbb Q}}
\newcommand{\rank}{{\rm rank}}
\newcommand{\Aut}{{\rm Aut}}
\newcommand{\Wedge}[1]{\stackrel{#1}{\wedge}}
\begin{document}
\title[Hilbert schemes of $K3$ surfaces are dense in moduli]
{Hilbert schemes of $K3$ surfaces are dense in moduli}
\date{\today}
\author{E. Markman}
\address{Department of Mathematics and Statistics, 
University of Massachusetts, Amherst, MA 01003}
\email{markman@math.umass.edu}
\author{S. Mehrotra}
\address{Department of Mathematics, University of Wisconsin, Madison, WI 53706}
\email{mehrotra@math.umass.edu}
\date{\today}

\begin{abstract}
We prove that the locus of Hilbert schemes of $n$ points on a projective $K3$ surface
is dense in the moduli space of irreducible holomorphic symplectic manifolds of that deformation type.
The analogous result for generalized Kummer manifolds is proven as well.
\end{abstract}

\maketitle

\tableofcontents 
%
\section{Introduction}

An {\em irreducible holomorphic symplectic manifold} is a simply connected compact K\"{a}hler manifold $X$,
such that $H^0(X,\Wedge{2}T^*X)$ is one-dimensional spanned by an everywhere non-degenerate
holomorphic $2$-form. The second cohomology $H^2(X,\Integers)$ of such an $X$ admits 
a unique non-degenerate symmetric integral and indivisible bilinear pairing, called
the  {\em Beauville-Bogomolov pairing}, which is a topological invariant \cite{beauville}.

A {\em marking} for an irreducible holomorphic symplectic manifold
$X$ is a choice of an isometry
$\eta:H^2(X,\Integers)\rightarrow \Lambda$ with a fixed lattice $\Lambda$. Two marked pairs
$(X_1,\eta_1)$, $(X_2,\eta_2)$ are {\em isomorphic}, if there exists 
an isomorphism $f:X_1\rightarrow X_2$, such that 
$\eta_1\circ f^*=\eta_2$.
There exists a coarse moduli space $\fM_\Lambda$ parametrizing
isomorphism classes of marked pairs \cite{huybrechts-basic-results}.
$\fM_\Lambda$ is a smooth complex manifold of dimension $b_2(X)-2$,
but it is non-Hausdorff.

Let $X$ be a K\"{a}hler manifold, which is  deformation equivalent to
the Hilbert scheme $S^{[n]}$, of length $n$ subschemes of a $K3$ surface $S$, $n\geq 2$.
Then $X$ is an  irreducible holomorphic symplectic manifold and 
$H^2(X,\Integers)$, endowed with its Beauville-Bogomolov pairing,
is isometric to the lattice $\Lambda$ recalled below \cite{beauville}. 
Let $H$ be the unimodular lattice of rank $2$ and signature $(1,1)$. Given an integer $d$, denote by
$\langle d\rangle$ the rank $1$ lattice generated by an element $x$ satisfying $(x,x)=d$. 
Let $E_8(-1)$ be the negative definite lattice of type $E_8$. 
Set
\begin{eqnarray*}
\Lambda_{K3} & := & E_8(-1)\oplus E_8(-1) \oplus H \oplus H \oplus H,
\\
\Lambda & := &  \Lambda_{K3}\oplus \langle 2-2n\rangle.
\end{eqnarray*}
The direct sums above are orthogonal. 
The lattice $\Lambda_{K3}$  is unimodular, while $\Lambda$ is not.

Let $\fM_\Lambda^0$ be a connected component of $\fM_\Lambda$ containing a marked pair
$(S^{[n]},\eta)$,
where $S^{[n]}$ is the Hilbert scheme of length $n$ sub-schemes of a $K3$ surface $S$. 

\begin{thm}
\label{thm-density-in-moduli}
The locus in $\fM_\Lambda^0$, consisting of marked pairs $(X,\eta)$, where $X$ is isomorphic to 
the Hilbert scheme $S^{[n]}$, for some projective $K3$ surface $S$, is dense in $\fM_\Lambda^0$.
\end{thm}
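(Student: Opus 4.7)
The strategy is to pass to the period domain via the global Torelli theorem, give a lattice-theoretic characterisation of Hilbert-scheme periods, and then prove density among periods.

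\emph{Step 1 (Reduction to periods).} By the local Torelli theorem of Beauville, the period map $\pi_0\colon\fM_\Lambda^0\to\Omega_\Lambda$ to the period domain
\[
\Omega_\Lambda \;=\; \{p\in\PP(\Lambda\otimes\CC):\ (p,p)=0,\ (p,\bar{p})>0\}
\]
is a local biholomorphism, and by Huybrechts' surjectivity theorem it is onto. It therefore suffices to show that the set $P\subset\Omega_\Lambda$ of periods $p$ such that some $(X,\eta)\in\pi_0^{-1}(p)$ has $X\cong S^{[n]}$ for a projective $K3$ surface $S$ is dense.

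\emph{Step 2 (Lattice characterisation).} Fix a reference marked Hilbert scheme $(S_0^{[n]},\eta_0)\in\fM_\Lambda^0$ and let $\delta_0\in\Lambda$ be the image under $\eta_0$ of half the class of the Hilbert--Chow exceptional divisor; so $(\delta_0,\delta_0)=2-2n$ and $\delta_0$ has divisibility $2n-2$ in $\Lambda$. Let $\fO\subset\Lambda$ be the orbit of $\delta_0$ under the monodromy group acting on $\fM_\Lambda^0$. By Markman's results on prime exceptional divisors and Hilbert--Chow contractions for $K3^{[n]}$-type manifolds, a marked pair $(X,\eta)\in\fM_\Lambda^0$ lying over $p$ is birational to $S^{[n]}$ for some $K3$ surface $S$ iff some $\delta\in\fO$ lies in $\eta(H^{1,1}(X,\ZZ))=\Lambda\cap p^\perp$; moreover such $S$ is projective iff $\delta^\perp\cap\Lambda\cap p^\perp$ contains a class of positive square.

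\emph{Step 3 (Density in $\Omega_\Lambda$).} For $\delta\in\fO$ and $h\in\Lambda$ with $(h,h)>0$ and $(h,\delta)=0$, set
\[
\Omega_{\delta,h}:=\{p\in\Omega_\Lambda:\ (p,\delta)=(p,h)=0\}.
\]
Since $\{\delta,h\}^\perp\otimes\RR\subset\Lambda_\RR$ has signature $(2,b_2(X)-4)$, each $\Omega_{\delta,h}$ is a non-empty complex submanifold of codimension two. The countable union $\bigcup_{\delta,h}\Omega_{\delta,h}$ is dense in $\Omega_\Lambda$ by a standard approximation argument: given $p\in\Omega_\Lambda$ and a neighbourhood $U\ni p$, the positive-definite $2$-plane $\RR\,\mathrm{Re}(p)\oplus\RR\,\mathrm{Im}(p)\subset\Lambda_\RR$ can be perturbed arbitrarily little so that its orthogonal complement contains a rational $(1,1)$-plane spanned by integer vectors with the prescribed squares and with $\delta$ in the orbit $\fO$; the corresponding $\Omega_{\delta,h}$ then meets $U$.

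\emph{Step 4 (Main obstacle: birational to isomorphic).} Steps~2--3 produce, over a dense subset of $\Omega_\Lambda$, marked pairs in $\fM_\Lambda^0$ birational to Hilbert schemes of projective $K3$ surfaces; but birational hyperk\"ahler manifolds share a period and generically appear as distinct non-separated points of $\fM_\Lambda^0$, only some of which are actual Hilbert schemes. The main technical hurdle is to arrange that the non-separated point realised in $\fM_\Lambda^0$ over our period is the Hilbert scheme itself, not merely a birational model. This amounts to showing that the auxiliary class $h$ in Step~3 can be chosen in the K\"ahler chamber associated with the Hilbert--Chow contraction, avoiding every MBM wall separating $S^{[n]}$ from its birational models. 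Using Markman's description of the wall-divisor classes for $K3^{[n]}$-type manifolds (equivalently the Bayer--Macri description via Bridgeland stability on $D^b(S)$), one verifies that such a choice of $h$ is available on a dense subset of $\Omega_\Lambda$, completing the proof.
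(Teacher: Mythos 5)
The main logical gap is in your Step 1. Because $\fM^0_\Lambda$ is not Hausdorff and the fibers of the period map consist of inseparable points corresponding to distinct birational models, density of the set of periods $p$ such that \emph{some} point of $P^{-1}(p)$ is a projective Hilbert scheme does \emph{not} imply density of Hilbert-scheme points in $\fM^0_\Lambda$: given an open $U\subset\fM^0_\Lambda$, its image $P(U)$ is open and meets your set of periods, but the Hilbert-scheme point over such a period may be a different (inseparable) point of the fiber, lying outside $U$. Your Step 4 addresses a related issue (which birational model is the Hilbert scheme) but only as an assertion — ``one verifies that such a choice of $h$ is available'' — and even if carried out it would not fix the quantifier problem, since it does not place the Hilbert-scheme point inside the prescribed open set. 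The paper resolves both difficulties at once by working over periods $p$ with $\Lambda^{1,1}(p)$ of rank \emph{one}, spanned by $\delta'\in\Sigma'$: then $H^{1,1}(X,\Integers)=\Integers\delta$ with $2\delta$ or $-2\delta$ effective, the K\"{a}hler cone is an entire half of the positive cone (Huybrechts, Boucksom), and after composing with the reflection $R_{\delta_1}$ — which lies in $W(S^{[n]})=Mon^2(S^{[n]})$ — one obtains a parallel-transport Hodge isometry taking a K\"{a}hler class to a K\"{a}hler class, so that \emph{every} marked pair in the fiber is isomorphic to $S^{[n]}$. Projectivity is then obtained not at the level of the period (your requirement that $h$ with $(h,h)>0$ lie in $\delta^\perp\cap p^\perp$ forces Picard rank $\geq 2$ and creates the wall-crossing problem you struggle with in Step 4) but afterwards, by deforming $S$ inside its Kuranishi family: the relative Hilbert scheme induces an open map $Def(S)\rightarrow \fM^0_\Lambda\cap P^{-1}((\delta')^\perp)$ and projective $K3$ surfaces are dense in $Def(S)$, so projective Hilbert schemes appear in every neighborhood of each rank-one point. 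This makes the birational-to-isomorphic step elementary and avoids MBM/Bayer--Macri machinery altogether.

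The second gap is in Step 3: the ``standard approximation argument'' hides the actual lattice-theoretic content. It is not enough to perturb the positive $2$-plane so that its orthogonal complement contains a primitive integral class of square $2-2n$ and divisibility $2n-2$; you must produce one lying in the \emph{given} monodromy orbit $\fO=\Sigma'$. When $n-1$ is not a prime power, $\Sigma$ breaks into $2^{\rho-1}$ distinct $W$-orbits, distinguished by the isometry class of the pair $(H_{\iota,\delta},\iota(\delta))$ obtained by embedding $\Lambda$ into the Mukai lattice $\widetilde{\Lambda}$, and nothing in a naive perturbation controls this invariant. Controlling it is precisely the work of Section 2 of the paper: one first shows density of the set $A$ of periods $q$ for which $\iota(T_q)+\Integers v$ is saturated in $\widetilde{\Lambda}$ (Lemma \ref{lemma-A-is-dense}), and then uses the uniqueness of primitive embeddings (\cite[Theorem I.2.9]{BHPV}) together with the orbit classification of Lemma \ref{lemma-monodromy-invariants} to realize a class of the prescribed orbit orthogonal to $q$. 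As written, your Step 3 simply asserts the conclusion of Lemma \ref{lemma-Omega-orbit-in-Sigma-is-dense} without proof, and your Step 2 criterion likewise presupposes the orbit-dependent statement whose verification is the heart of the argument.
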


The proof of the theorem is concluded in section \ref{sec-density-in-moduli}. 
In section \ref{sec-density-of-kummers} we state and prove the analogous Theorem \ref{thm-density-of-kummers}
for generalized Kummer varieties. We also compute the monodromy group of 
$2n$ dimensional generalized Kummer varieties, when $n+1$ is a prime power 
(Corollary \ref{cor-monodromy-of-generalized-kummers-when-n+1-is-a-prime-power}).
Section 2 contains lattice-theoretic lemmas preparatory to these density results.

Researchers in the field have understood for some time that the Torelli Theorem for irreducible holomorphic symplectic manifolds
should imply Theorem \ref{thm-density-in-moduli}. This Torelli Theorem was recently proven by Verbitsky \cite{verbitsky}, 
so the time is ripe to provide a write-up of the proof of Theorem \ref{thm-density-in-moduli}.
The result will be used in an essential way in our up-coming work  
\cite{MS}. A related density result was proven recently by Anan'in and Verbitsky
\cite{AV}. 

%
\section{Density of periods}
\label{sec-density-of-periods}

The {\em period} of a marked pair $(X,\eta)$ is the
line $\eta[H^{2,0}(X)]$ considered as a point in the projective space
$\PP[\Lambda\otimes_\Integers\ComplexNumbers]$. The period lies in the period domain 
\begin{equation}
\label{eq-period-domain}
\Omega \ := \ \{
p \ : \ (p,p)=0 \ \ \ \mbox{and} \ \ \ 
(p,\bar{p}) > 0
\}.
\end{equation}
$\Omega$ is an open subset, in the classical topology, of the quadric in 
$\PP[\Lambda\otimes\ComplexNumbers]$ of isotropic lines \cite{beauville}. 
The period map 
\begin{eqnarray}
\label{eq-period-map}
P \ : \ {\mathfrak M}_\Lambda & \longrightarrow & \Omega,
\\
\nonumber
(X,\eta) & \mapsto & \eta[H^{2,0}(X)]
\end{eqnarray}
is a local isomorphism, by the Local Torelli Theorem \cite{beauville}.

Given a point $p\in \Omega$, set 
$\Lambda^{1,1}(p):=\{\lambda\in \Lambda \ : \ (\lambda,p)=0\}$. Note that 
$\Lambda^{1,1}(p)$ is a sublattice of $\Lambda$ and 
$\Lambda^{1,1}(p)=(0)$, if $p$ does not belong to 
the countable union of hyperplane sections
$\cup_{\lambda\in \Lambda\setminus\{0\}}[\lambda^\perp\cap\Omega]$.
Given a marked pair $(X,\eta)$, we get the isomorphism
$H^{1,1}(X,\Integers)\cong\Lambda^{1,1}(P(X,\eta))$, via the restriction of $\eta$.

\hide{
\begin{defi}
\label{def-positive-cone}
Let $X$ be an irreducible holomorphic symplectic manifold.
The cone 
$
\{\alpha\in H^{1,1}(X,\RealNumbers) \ : \ (\alpha,\alpha)>0\}
$
has two connected components. 
The {\em positive cone} $\C_X$
is the connected component containing the K\"{a}hler cone 
$\K_X$.
\end{defi}


Two points $x$ and $y$ of a topological space $M$ are 
{\em inseparable}, if every pair of open subsets $U$, $V$, 
with $x\in U$ and $y\in V$, have a non-empty intersection $U\cap V$.
A point $x\in M$ is a {\em Hausdorff point}, 
if there does not exist any point $y\in [M\setminus\{x\}]$,
such that $x$ and $y$ are inseparable.

\begin{thm} 
\label{thm-global-torelli}
(The Global Torelli Theorem)
Fix a connected component $\fM^0_\Lambda$ of $\fM_\Lambda$.
\begin{enumerate}
\item
(\cite{huybrechts-basic-results}, Theorem 8.1)
The period map $P$ restricts to a surjective holomorphic map
$P_0:\fM^0_\Lambda\rightarrow \Omega$. 
\item 
\label{thm-item-injectivity}
(\cite{verbitsky}, Theorem 1.16)
The fiber $P_0^{-1}(p)$ consists of pairwise 
inseparable points, for all $p\in \Omega$.
\item
\label{thm-item-inseparable-are-bimeromorphic}
(\cite{huybrechts-basic-results}, Theorem 4.3)
Let  $(X_1,\eta_1)$ and $(X_2,\eta_2)$ be two inseparable points of $\fM_\Lambda$.
Then $X_1$ and $X_2$ are bimeromorphic.
\item
\label{thm-item-single-Hausdorff-point}
The marked pair $(X,\eta)$ is a Hausdorff point of $\fM_\Lambda$,
if and only if $\C_X=\K_X$.
\item
\label{thm-item-cyclic-picard-and-projective-imply-K-X=C-X}
The fiber $P_0^{-1}(p)$, $p\in \Omega$, consists of a single Hausdorff point,
if $\Lambda^{1,1}(p)$ is trivial, or if $\Lambda^{1,1}(p)$ 
is of rank $1$, generated by a class $\alpha$ satisfying $(\alpha,\alpha)\geq 0$.
\end{enumerate}
\end{thm}
}

Let $\C_\Lambda:=\{x\in \Lambda\otimes_\Integers\RealNumbers \ : \ (x,x)>0\}$ be the positive cone.
Then $H^2(\C_\Lambda,\Integers)$ is isomorphic to $\Integers$ and is thus a character of the isometry group
$O(\Lambda)$, called the {\em spinor norm}, or {\em orientation} of the positive cone \cite[Lemma 4.1]{markman-survey}.
Denote by $O^+(\Lambda)$ the subgroup of isometries preserving the orientation of $\C_\Lambda$.
Let $W$ be the subgroup of $O^+(\Lambda)$ acting on
$\Lambda^*/\Lambda$ by multiplication by $\pm 1$. 
Let $\Sigma$ be the subset of $\Lambda$ consisting of primitive classes $\delta$ satisfying 
$(\delta,\delta)=2-2n$, and such that $(\delta,\lambda)$  is divisible by $2n-2$, for all $\lambda\in \Lambda$. 
A class $\delta$ of $\Lambda$ belongs to $\Sigma$, if and only if $\delta^\perp$
is isometric to $\Lambda_{K3}$, where 
$\delta^\perp$ is the sublattice 
orthogonal to $\delta$ in $\Lambda$ \cite[Lemma 7.1]{markman-prime-exceptional}.

Given a $W$-orbit $\Sigma'$ in $\Sigma$, set 
\[
\Omega_{\Sigma'} \ \ := \ \ 
\{p\in \Omega \ : \ \Lambda^{1,1}(p)\cap \Sigma' \neq \emptyset
\}.
\]

We prove in this section the following statement.

\begin{lem}
\label{lemma-Omega-orbit-in-Sigma-is-dense}
The subset $\Omega_{\Sigma'}$ is dense in $\Omega$, for every $W$-orbit $\Sigma'$.
\end{lem}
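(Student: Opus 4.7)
Fix $p_0 \in \Omega$ and an open neighborhood $U$. The real $2$-plane $P := \langle \mathrm{Re}\,p_0, \mathrm{Im}\,p_0 \rangle_\RR$ is positive definite (from $(p_0, p_0) = 0$ and $(p_0, \bar p_0) > 0$), so its orthogonal complement $P^\perp \subset \Lambda \otimes \RR$ has signature $(1, 20)$. Fix any $\delta_0 \in \Sigma'$. If $\delta_0 \perp p_0$ then $p_0 \in \Omega_{\delta_0} \cap U$ and we are done; otherwise, writing $\delta_0 = \delta_P + \delta_{P^\perp}$ for the orthogonal decomposition with respect to $\Lambda \otimes \RR = P \oplus P^\perp$, the vector $\delta_{P^\perp} \in P^\perp$ has negative square (since $(\delta_P, \delta_P) \geq 0$ and $(\delta_0, \delta_0) = 2-2n < 0$), so the subspace $\{v \in P^\perp : v \perp \delta_0\}$ has signature $(1, 19)$ and contains a non-zero real isotropic vector $v_0$, satisfying $v_0 \perp p_0$, $v_0 \perp \delta_0$, and $(v_0, v_0) = 0$. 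The partial derivative in $p$ at $(p_0, v_0)$ of the pairing $F(p, v) := (p, v)$, namely $q \mapsto (q, v_0)$ on $T_{p_0}\Omega = p_0^\perp / \CC p_0$, is non-zero since the real vector $v_0$ is not a complex multiple of the complex isotropic $p_0$. The implicit function theorem then furnishes an open neighborhood $V$ of $v_0$ in $\Lambda \otimes \RR$ with $v^\perp \cap U \neq \emptyset$ for every $v \in V$; since $v^\perp$ depends only on $[v] \in \PP(\Lambda \otimes \RR)$, it suffices to produce $\delta \in \Sigma'$ with $[\delta]$ in the open neighborhood $\PP(V)$ of $[v_0]$.

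\textbf{Density step.} By the characterization recalled just before the lemma, $\delta_0^\perp$ is isometric to the unimodular lattice $\Lambda_{K3}$ of signature $(3, 19)$; its primitive isotropic integer vectors are dense in the real isotropic cone of $\delta_0^\perp \otimes \RR$ (classical, since $\Lambda_{K3}$ contains two hyperbolic planes). As $v_0 \in \delta_0^\perp \otimes \RR$ is isotropic, I choose a primitive isotropic $e \in \delta_0^\perp \cap \Lambda$ with $[e]$ close enough to $[v_0]$ to lie in $\PP(V)$, and set $\delta_k := \delta_0 + (2n-2)\,k\,e$ for $k \in \ZZ$. Using $e \perp \delta_0$ and $(e, e) = 0$, one computes $(\delta_k, \delta_k) = 2-2n$; the factor $2n-2$ ensures $\delta_k/(2n-2) \in \Lambda^*$, so $\delta_k$ satisfies the divisibility condition defining $\Sigma$; and the class of $\delta_k/(2n-2)$ in $\Lambda^*/\Lambda$ equals that of $\delta_0/(2n-2)$, so Eichler's criterion (applicable since $\Lambda_{K3}$ contains two orthogonal hyperbolic planes) places $\delta_k$ in the same $W$-orbit $\Sigma'$ as $\delta_0$ whenever $\delta_k$ is primitive in $\Lambda$. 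Failure of primitivity forces $\delta_0 + (2n-2)\,k\,e \in m\Lambda$ for some $m \mid (2n-2)$, a congruence on $k$ modulo $m$, so primitivity fails on at most finitely many arithmetic progressions of $k$. Since $[\delta_k] \to [e]$ as $|k| \to \infty$, taking $k$ large outside these progressions yields $[\delta_k] \in \PP(V)$, so $\delta := \delta_k \in \Sigma'$ has $\delta^\perp \cap U \neq \emptyset$.

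\textbf{Main obstacle.} The crux is the density step: the invocation of Eichler's criterion confirming $\delta_k \in \Sigma'$ (rather than merely $\delta_k \in \Sigma$), and the congruence analysis for primitivity. Both exploit the specific decomposition $\Lambda = \Lambda_{K3} \oplus \langle 2-2n \rangle$, with $\Lambda_{K3}$ unimodular and containing hyperbolic planes, and the identification of $\Lambda^*/\Lambda$ with $\ZZ/(2n-2)\ZZ$. Once these are in hand, the local IFT step supplies $\Omega_{\Sigma'} \cap U \neq \emptyset$ for every open $U \subset \Omega$, establishing the density.
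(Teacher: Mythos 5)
Your argument is correct, but it takes a genuinely different route from the paper's. The paper works inside the larger lattice $\widetilde{\Lambda}=\Lambda_{K3}\oplus H$: it first shows (Lemma \ref{lemma-A-is-dense}, by perturbing the positive-definite plane $\iota(T_q)$ using a hyperbolic summand of $L_{q,\iota}^{\perp}$) that the periods $q$ with $\Lambda^{1,1}(q)$ of rank $21$ and $\iota(T_q)+\Integers v$ saturated form a dense set $A$, and then, for each $q\in A$, uses the existence and uniqueness of primitive embeddings \cite[Theorem I.2.9]{BHPV} to manufacture a class $\delta\in\Sigma\cap\Lambda^{1,1}(q)$ realizing any prescribed value of the orbit invariant $f$ of Lemma \ref{lemma-monodromy-invariants}. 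You instead fix $\delta_0\in\Sigma'$, translate it by multiples of a primitive isotropic $e\in\delta_0^{\perp}$ chosen projectively close to a real isotropic direction $v_0$ orthogonal to $p_0$ and to $\delta_0$, and control the orbit directly by Eichler's criterion; this is legitimate here, since $\Lambda$ splits off $H\oplus H$, each $\delta_k$ has divisibility $2n-2$, and $\delta_k/(2n-2)\equiv\delta_0/(2n-2)$ in $\Lambda^*/\Lambda$, so $\delta_k$ lies in the orbit of $\delta_0$ under the subgroup of $O^+(\Lambda)$ acting trivially on $\Lambda^*/\Lambda$, a fortiori in the $W$-orbit $\Sigma'$. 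Your transversality step plays the role of the paper's identification of $\Omega$ with the Grassmannian of positive oriented $2$-planes: both convert ``a class in $\Sigma'$ whose direction approximates something orthogonal to $p_0$'' into ``a period near $p_0$ orthogonal to that class.'' What your approach buys is independence from the embedding $\iota$ and from the Nikulin-type embedding theorems, at the price of quoting Eichler's criterion and the density of rational points on an isotropic quadric; what the paper's approach buys is a construction that transposes verbatim to the generalized Kummer setting of Section \ref{sec-density-of-kummers} and reuses the invariant $f$ already needed for the monodromy analysis.

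One step should be tightened. You dispose of the primitivity of $\delta_k=\delta_0+(2n-2)ke$ by excluding ``finitely many arithmetic progressions of $k$,'' but you do not verify that their complement contains arbitrarily large $k$; as stated, the exclusion could in principle be vacuous. In fact no exclusion is needed: if $\delta_k=m\mu$ with $m\geq 2$ and $\mu\in\Lambda$, then $m^2$ divides $(\delta_k,\delta_k)=2-2n$ because $\Lambda$ is even, so $m$ divides $2n-2$, hence $(2n-2)ke\in m\Lambda$ and $\delta_0\equiv\delta_k\equiv 0\pmod{m\Lambda}$, contradicting the primitivity of $\delta_0$. Thus every $\delta_k$ is primitive and the argument closes.
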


The proof will require two Lemmas. 
Set $\widetilde{\Lambda}  :=  \Lambda_{K3} \oplus H.$
Choose a primitive embedding
\begin{equation}
\label{eq-iota}
\iota \ : \ \Lambda \ \ \ \hookrightarrow \ \ \ \widetilde{\Lambda}.
\end{equation}
Given a class $\delta$ in $\Sigma$ set $H_{\iota,\delta}:=[\iota(\delta^\perp)]^\perp$.
Then $H_{\iota,\delta}$ is isometric to $H$.
Let $v$ be a class generating the sub-lattice in $\widetilde{\Lambda}$ orthogonal to 
$\iota(\Lambda)$. Then $H_{\iota,\delta}$ is the saturation in $\widetilde{\Lambda}$ 
of the sub-lattice spanned by $\iota(\delta)$ and $v$.

Let $I_n(H)$ be the subset of $H$ consisting of primitive classes $\delta$, such that $(\delta,\delta)=2-2n$.
The isometry group $O(H)$ is isomorphic to $\Integers/{2\Integers}\times \Integers/{2\Integers}$.
Let $I_n(H)/O(H)$ be the orbit space. This orbit space is in natural bijection with isometry
classes $[(H',\delta')]$ of pairs $(H',\delta')$, where $H'$ is a lattice isometric to $H$ and
$\delta'$ is a primitive class in $H'$ satisfying $(\delta',\delta')=2-2n$. 
Indeed, given such a pair $(H',\delta')$, choose an isometry $g:H'\rightarrow H$.
We get the orbit $O(H)g(\delta')$, which is independent of the isometry $g$ chosen.
Define
\[
f \ : \ \Sigma \ \ \ \rightarrow \ \ \ I_n(H)/O(H),
\]
by $f(\delta):=[\left(H_{\iota,\delta},\iota(\delta)\right)]$.

\begin{lem} 
\label{lemma-monodromy-invariants}
\cite[Lemma 6.4]{markman-prime-exceptional}
Two classes $\delta_1$, $\delta_2$ in $\Sigma$ belong to the same $W$-orbit in $\Sigma$,
if and only if $f(\delta_1)=f(\delta_2)$. The map $f$ is surjective.
\end{lem}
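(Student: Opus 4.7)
The plan is to lift the analysis to the unimodular overlattice $\widetilde\Lambda$ via $\iota$. Two structural facts will drive the argument. First, because $\iota(\delta^\perp) \cong \Lambda_{K3}$ is unimodular, the orthogonal decomposition
\[
\widetilde\Lambda \;=\; \iota(\delta^\perp) \,\oplus\, H_{\iota,\delta}
\]
holds as an internal direct sum for every $\delta \in \Sigma$ (both summands are unimodular and the ranks and signatures add up correctly). Second, inside $H_{\iota,\delta} \cong H$ the distinguished vector $v$ is characterized intrinsically as the unique (up to sign) primitive element of norm $2n-2$ orthogonal to $\iota(\delta)$; consequently any isometry of pairs $(H_{\iota,\delta_1},\iota(\delta_1)) \cong (H_{\iota,\delta_2},\iota(\delta_2))$ automatically carries $v$ to $\pm v$.

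For the ``only if'' direction, given $w \in W$ I would invoke Nikulin's lemma on gluing isometries through a unimodular overlattice to extend $w$ to $\tilde w \in O(\widetilde\Lambda)$ with $\tilde w(v) = \pm v$, the sign matching the action of $w$ on $\Lambda^*/\Lambda$. Then $\tilde w$ carries $\iota(\delta^\perp)$ to $\iota(w(\delta)^\perp)$ and $H_{\iota,\delta}$ to $H_{\iota,w(\delta)}$, so restriction to $H_{\iota,\delta}$ produces the required isometry of pairs and yields $f(\delta) = f(w(\delta))$. For the converse, an isometry $\phi: H_{\iota,\delta_1} \to H_{\iota,\delta_2}$ with $\phi(\iota(\delta_1)) = \iota(\delta_2)$ satisfies $\phi(v) = \pm v$ by the second structural fact. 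Combining $\phi$ with any isometry $\psi: \iota(\delta_1^\perp) \to \iota(\delta_2^\perp)$ (which exists because both are $\cong \Lambda_{K3}$) produces $\tilde w := \psi \oplus \phi \in O(\widetilde\Lambda)$ preserving $\langle v\rangle$, hence preserving $\iota(\Lambda) = v^\perp$. The induced isometry $w := \iota^{-1}\tilde w \iota$ of $\Lambda$ sends $\delta_1$ to $\delta_2$; the gluing forces $w$ to act on $\Lambda^*/\Lambda$ by the same sign as $\phi(v)/v$, so $w$ lies in $W$ once $\psi$ is adjusted (by an orientation-reversing isometry of $\Lambda_{K3}$ if needed) to make $w$ orientation-preserving.

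The step I expect to be most delicate is surjectivity of $f$. The $O(H)$-orbits in $I_n(H)$ are indexed by unordered coprime factorizations $n-1 = pq$, and for each such factorization one must produce a primitive sublattice $L \subseteq \widetilde\Lambda$ isometric to $H$ containing $v$ in the coordinate position $v = p e_L + q f_L$ for some isotropic basis $e_L, f_L$ of $L$. Because $\widetilde\Lambda$ is unimodular, the orthogonal complement of any unimodular rank-two sublattice is automatically isometric to $\Lambda_{K3}$, so the class $\delta := \iota^{-1}(-q e_L + p f_L)$ will lie in $\Sigma$ and realize the prescribed orbit. The technical heart is the construction of $L$: fixing the standard model $\widetilde\Lambda = \Lambda_{K3} \oplus H$ with $v = E + (n-1)F$ in the $H$-summand, one writes $e_L = -qx + AE + BF$ and $f_L = px + CE + DF$ with $x \in \Lambda_{K3}$ and $A, B, C, D \in \Integers$, translates the isotropy and pairing conditions $(e_L, e_L) = (f_L, f_L) = 0$, $(e_L, f_L) = 1$, and $p e_L + q f_L = v$ into linear and B\'ezout-type congruences modulo $p$ and $q$, and solves them by choosing $x$ of a suitable even norm, exploiting the fact that every even integer is realized as a norm in $\Lambda_{K3}$ via its hyperbolic summands.
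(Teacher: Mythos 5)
The paper itself gives no argument for this lemma: it is imported wholesale from \cite[Lemma 6.4]{markman-prime-exceptional}, so there is no in-text proof to compare against, and your write-up has to stand on its own. The orbit-criterion half of your proposal does: unimodularity of $\iota(\delta^\perp)\cong\Lambda_{K3}$ indeed gives the internal orthogonal splitting $\widetilde{\Lambda}=\iota(\delta^\perp)\oplus H_{\iota,\delta}$; $v$ is the primitive generator, up to sign, of $\iota(\delta)^\perp\cap H_{\iota,\delta}$; Nikulin's gluing criterion through the unimodular overlattice extends any $w\in W$ to $\tilde{w}\in O(\widetilde{\Lambda})$ with $\tilde{w}(v)=\pm v$, the sign being the discriminant character of $w$; and in the converse direction, composing $\psi$ with an isometry of $\iota(\delta_1^\perp)\cong\Lambda_{K3}$ that reverses the orientation of its positive cone (a reflection in a class of square $2$, or $-\id$) repairs the $O^+$ condition without changing the (trivial) discriminant action or the image of $\delta_1$. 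This is the standard argument and, in substance, the one behind the cited reference.

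The surjectivity step, which you flag as the delicate one, is where the proposal fails as written. First, a concrete error: if $v=pe_L+qf_L$, then $(v,-qe_L+pf_L)=p^2-q^2\neq 0$ in general, so $-qe_L+pf_L$ does not lie in $v^\perp=\iota(\Lambda)$ and cannot be $\iota(\delta)$; the correct element is $\pm(pe_L-qf_L)$, which is orthogonal to $v$, has square $-2pq=2-2n$, and has orthogonal complement $L^\perp\cong\Lambda_{K3}$ inside $\Lambda$, hence defines a class of $\Sigma$ realizing the orbit of $pe-qf$ in $I_n(H)/O(H)$. Second, the existence of the hyperbolic plane $L$ containing $v$ with $v=pe_L+qf_L$ is left as an unexecuted system of congruences; your ansatz $e_L=-qx+AE+BF$, $f_L=px+CE+DF$ is the right (indeed forced) one and the system is solvable --- for instance, for $p=2$, $q=3$ one may take $(x,x)=2$, $e_L=-3x-E+9F$, $f_L=2x+E-4F$ --- but you have not shown solvability for all coprime factorizations, so as it stands this is a plan rather than a proof. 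The cleanest repair avoids the computation: $pe+qf$ is a primitive vector of square $2n-2$ in a standard hyperbolic summand $H=\langle e,f\rangle$ of $\widetilde{\Lambda}$, and $O(\widetilde{\Lambda})$ acts transitively on primitive vectors of a given square (Eichler's criterion, or the uniqueness part of \cite[Theorem I.2.9]{BHPV}, which is exactly the tool the paper uses in the proof of Lemma \ref{lemma-Omega-orbit-in-Sigma-is-dense}); any $g\in O(\widetilde{\Lambda})$ with $g(pe+qf)=v$ then yields $L:=g(H)$ with the required presentation of $v$. With these two repairs your argument is complete.
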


\begin{rem}
The cardinality of $I_n(H)/O(H)$ is
$2^{\rho-1}$, where $\rho$ is the number of distinct positive primes in the
prime decomposition $n-1=p_1^{e_1} \cdots p_\rho^{e_\rho}$
\cite[Lemmas 4.2 and 4.3]{markman-constraints}.
\end{rem}

\hide{
\begin{lem} 
\cite[Lemmas 4.2 and 4.3]{markman-constraints}
\begin{enumerate}
\item
The group $O^+(\Lambda)$ acts transitively on $\Sigma$ and $\Sigma$ contains 
$2^{\rho-1}$  $W$-orbits, where $\rho$ is the number of distinct positive primes in the
prime decomposition $n-1=p_1^{e_1} \cdots p_\rho^{e_\rho}$.
\item
Let $\iota:\Lambda\rightarrow \widetilde{\Lambda}$ be a primitive embedding
and $h$ an element of $O^+(\Lambda)$. There exists an isometry $g$ of $O(\widetilde{\Lambda})$,
satisfying $\iota\circ h=g\circ \iota$, if and only if $h$ belongs to $W$.
\end{enumerate}
\end{lem} 
}

Let $q\in \Omega$ be a period, such that $\Lambda^{1,1}(q)$ has rank $21$ and signature $(1,20)$.
Denote by
$T_q$ the rank $2$ sub-lattice of $\Lambda$ orthogonal to $\Lambda^{1,1}(q)$. 
Note that $T_q$ is positive definite. 
Let $v$ be a primitive class in $\widetilde{\Lambda}$ orthogonal to $\iota(\Lambda)$. Then $(v,v)=2n-2$. 
Denote by $L_{q,\iota}$ the saturation in $\widetilde{\Lambda}$ of the lattice spanned by 
$\iota(T_q)$ and $v$. 
Let $A$ be the set of periods $q\in \Omega$, 
such that $\Lambda^{1,1}(q)$ has rank $21$ and signature $(1,20)$, and $\iota(T_q)+\Integers v$ is saturated
in $\widetilde{\Lambda}$ and so equal to $L_{q,\iota}$.

\begin{lem}
\label{lemma-A-is-dense}
The set $A$ is dense in $\Omega$.
\end{lem}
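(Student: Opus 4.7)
The plan is to show that any $q_0 \in \Omega$ is approximated arbitrarily well by periods in $A$. First observe that periods $q$ with $\Lambda^{1,1}(q)$ of rank $21$ and signature $(1,20)$ correspond, modulo complex conjugation $q \leftrightarrow \bar q$, to rational positive-definite $2$-planes $P_q := \langle \operatorname{Re}(q), \operatorname{Im}(q) \rangle_{\mathbb{R}} \subset \Lambda \otimes \mathbb{R}$, with $T_q = P_q \cap \Lambda$. The density of such periods in $\Omega$ follows from density of $\Lambda \otimes \mathbb{Q}$ in $\Lambda \otimes \mathbb{R}$, which makes rational positive-definite $2$-planes dense in the real Grassmannian of positive-definite $2$-planes, combined with the fact that the period map $\Omega \to \operatorname{Gr}^+(2,\Lambda \otimes \mathbb{R})$ is a local $2$-fold cover.

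Next I would reformulate the saturation condition lattice-theoretically. Since $\widetilde{\Lambda}$ is unimodular, $L_{q,\iota}^\perp = \iota(T_q^\perp)$, so by Nikulin's discriminant form machinery $\operatorname{disc}(L_{q,\iota}) = \operatorname{disc}(T_q^\perp)$, and hence
\[
[L_{q,\iota} : \iota(T_q) + \mathbb{Z}v]^2 \;=\; \frac{\operatorname{disc}(T_q)(2n-2)}{\operatorname{disc}(T_q^\perp)}.
\]
Thus $\iota(T_q) + \mathbb{Z}v$ is saturated iff $\operatorname{disc}(T_q^\perp) = (2n-2)\operatorname{disc}(T_q)$, iff the glue subgroup $G_{T_q} := \Lambda/(T_q \oplus T_q^\perp) \hookrightarrow T_q^*/T_q$ is the full discriminant group. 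Fixing a generator $\bar w \in \widetilde{\Lambda}/(\iota(\Lambda) \oplus \mathbb{Z}v) \cong \mathbb{Z}/(2n-2)$ with $(2n-2)w = \iota(\lambda_0) + v$ for a fixed $\lambda_0 \in \Lambda$, a short check (the class $k\bar w$ lifts into the saturation iff $k\lambda_0 \in T_q + (2n-2)\Lambda$) shows this is equivalent to the coset condition
\[
\overline{T_q} \cap \langle \overline{\lambda_0}\rangle \;=\; \{0\} \ \ \text{in} \ \ \Lambda/(2n-2)\Lambda.
\]

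To produce $T_q$ with the desired properties close to any target, I would start from a rational positive-definite $2$-plane $P_0 = \mathbb{R}a + \mathbb{R}b$ with $P_0$ close to $P_{q_0}$ and $T_{P_0} = \mathbb{Z}a + \mathbb{Z}b$ primitive. By Dirichlet-type simultaneous approximation in $\Lambda \otimes \mathbb{R}$, for any prescribed coset $\bar c \in \Lambda/(2n-2)\Lambda$ there exist infinitely many primitive $b' \in \Lambda$ with $\overline{b'} = \bar c$ and $\mathbb{R}b'$ arbitrarily close to $\mathbb{R}b$; moreover for generic such $b'$ the sublattice $\mathbb{Z}a + \mathbb{Z}b'$ is primitive of rank $2$ in $\Lambda$. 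The "forbidden" cosets $\bar c$, those for which $\langle \overline{a}, \bar c\rangle$ meets $\langle \overline{\lambda_0}\rangle \setminus \{0\}$, form a finite proper subset of $\Lambda/(2n-2)\Lambda$, so a good choice of $\bar c$ is available. The resulting primitive rank-$2$ positive-definite $T = \mathbb{Z}a + \mathbb{Z}b'$ has $2$-plane arbitrarily close to $P_0$ and satisfies the coset condition, hence the saturation. The associated period then lies in $A$ arbitrarily close to $q_0$.

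The main obstacle is ensuring simultaneously that $\mathbb{Z}a + \mathbb{Z}b'$ is primitive in $\Lambda$, that $\overline{b'}$ falls in the prescribed coset, and that the $2$-plane of $T$ stays close to $P_0$; this is handled by the density of primitive vectors in each coset of $(2n-2)\Lambda$ and the genericity of primitivity for the rank-$2$ sublattice.
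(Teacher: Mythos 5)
Your lattice-theoretic reduction is sound: since $\widetilde{\Lambda}$ is unimodular and $L_{q,\iota}^\perp=\iota(T_q^\perp)$, saturation of $\iota(T_q)+\Integers v$ is indeed equivalent to the glue group of $T_q\oplus T_q^\perp\subset\Lambda$ surjecting onto $T_q^*/T_q$, and hence to the congruence condition $\overline{T_q}\cap\langle\overline{\lambda_0}\rangle=\{0\}$ in $\Lambda/(2n-2)\Lambda$. The gap is in the construction step. You keep $a$ fixed and only move $b$ into a well-chosen coset, asserting that the forbidden cosets $\bar c$ (those for which $\langle\bar a,\bar c\rangle$ meets $\langle\overline{\lambda_0}\rangle\setminus\{0\}$) form a proper subset of $\Lambda/(2n-2)\Lambda$. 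This is false in general, because it depends on $\bar a$: if $\langle\bar a\rangle$ already meets $\langle\overline{\lambda_0}\rangle$ nontrivially (for instance $a\equiv\lambda_0 \ {\rm mod}\ (2n-2)\Lambda$, which can perfectly well happen for the rational plane approximating an arbitrary $q_0$), then \emph{every} coset $\bar c$ is forbidden and no choice of $b'$ repairs it. So you must perturb both generators (the paper's proof does exactly this, modifying first $u_1$ and then $u_2$), or show the initial rational plane can be chosen with $\bar a$ in a good coset --- which is the same difficulty one step earlier and is not addressed.

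Two further claims you rely on are stated without proof and are not automatic. First, ``any prescribed coset $\bar c$ contains infinitely many primitive $b'$ with dense directions'' fails for cosets all of whose elements are imprimitive (e.g.\ $\bar c=0$, or any coset whose representatives are all divisible by a prime dividing $2n-2$); you need to produce a good coset that simultaneously avoids $\langle\overline{\lambda_0}\rangle$ and admits primitive representatives in dense directions. Second, the primitivity of $\Integers a+\Integers b'$ in $\Lambda$ is essential, not a genericity afterthought: your discriminant/glue criterion was derived for the primitive lattice $T_q$, so if $\Integers a+\Integers b'$ is not saturated you would have to verify the congruence condition for its saturation, whose image in $\Lambda/(2n-2)\Lambda$ is larger than $\langle\bar a,\bar c\rangle$. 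The paper avoids all of this bookkeeping: it perturbs $u_1,u_2$ by vectors from a hyperbolic summand $H=\langle e_1,f_1\rangle\subset L_{q,\iota}^\perp$ and certifies saturation of $\Pi\oplus\Integers v$ directly by the functionals $(f_1,\bullet)$, $(f_2,\bullet)$, which kill $v$ and restrict to a dual system on $\Pi$; this yields primitivity and saturation in one stroke. Your arithmetic route could likely be completed along these lines (perturb both generators into cosets generating a subgroup meeting $\langle\overline{\lambda_0}\rangle$ trivially, and prove the primitivity statements), but as written those steps are missing.
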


\begin{proof}
The period domain $\Omega$ may be identified with
the Grassmannian of positive oriented 2-planes in $\Lambda\otimes_\ZZ \RR$
\cite[VIII.8]{BHPV}. Thus given
$q \in \Omega$ with $\Lambda^{1,1}(q)$ of rank $21$, we want to produce
a positive definite rank $2$ sublattice $\Pi \subset \iota(\Lambda)$ 
arbitrarily close to $\iota(T_q)$, such that the span of $\Pi$ and 
$v$ is saturated. The idea is to do so by perturbing 
$\iota(T_q)$, keeping $v$ fixed.

By \cite[Theorem I.2.9, (ii)]{BHPV}, the embedding of 
$L_{\iota,q}$ into $\widetilde{\Lambda}$
is unique up to the action of $O(\widetilde{\Lambda})$,
while by part (i) of the same result,  $L_{\iota,q}$ embeds into $H^{\oplus 3}$.
Thus, $L_{\iota,q}^{\perp}\subset\widetilde{\Lambda}$ contains $H$ as a direct
summand, $H=\langle e_1,f_1 \rangle$, with $(e_1,e_1)=(f_1,f_1)=0$ and $(e_1,f_1)=1$.

Suppose $\iota(T_q)$ is spanned by two elements
$u_1,u_2\in \iota(\Lambda)$. 
For $k\in \NN$, define $T_1$ to be the rank 2 sublattice of 
$\widetilde{\Lambda}$ spanned by $u_1':=ku_1 + e_1$ and $u_2$. Then, 
$T_1$ is positive definite because $e_1\perp T_q$ and $T_q$ is positive
definite. Also $(u_1',v)=0$, whence
$T_1\subset \iota(\Lambda)$. Choosing $k$ large enough,
we may arrange $u_1'$ to be as close to $u_1$ in 
$\PP (\widetilde{\Lambda} \otimes \RR)$ as desired. 
Also note that $(f_1,v)=(f_1,u_2)=0$, and $(f_1,u_1')=1$.  

The reasoning of the previous paragraph, now applied to the 
sublattice $T_1$ and its element $u_2\in T_1$, 
produces an element $u_2'$ arbitrarily close to $u_2$, such 
that $\Pi:=\langle u_1', u_2'\rangle$ is positive definite, and
$\Pi\subset v^\perp=\iota(\Lambda)$.
Moreover, as above, there exist an element $f_2$ such that $(f_2,v)=(f_2,u_1')=0$,
and $(f_2,u_2')=1$. Thus we have a pair of elements
$f_1,f_2$ in $\widetilde{\Lambda}$ such that $(f_i,v)=0$, and the matrix $(f_i,u_j')$ is upper-triangular with
1's on the diagonal. 
Let $L$ be the saturation of $\Pi\oplus\Integers v$ in $\widetilde{\Lambda}$.
Then $(f_1,\bullet)$ and $(f_2,\bullet)$ restrict to elements of $\ker[L^*\rightarrow (\Integers v)^*]$
and their images in $\Pi^*$ span the latter.
This shows that $L=\Pi\oplus \Integers v$.
\end{proof}

\begin{proof} (Of Lemma \ref{lemma-Omega-orbit-in-Sigma-is-dense}).
It suffices to show  that $\Omega_{\Sigma'}$ contains the set $A$,  
by Lemma \ref{lemma-A-is-dense}.
Let $q$ be a period in $A$. 
Let $\widetilde{L}_q$ be the rank $4$ lattice, which is the orthogonal direct sum of $T_q$ and $H$.
There exists a primitive embedding 
$\tilde{\iota}_1: \widetilde{L}_q\rightarrow \widetilde{\Lambda}$, by \cite[Theorem I.2.9]{BHPV}.
Choose a primitive class $v_1$ in $\tilde{\iota}_1(H)$ satisfying $(v_1,v_1)=2n-2$. 
Let $\tilde{\delta}_1$ be a primitive class in $\tilde{\iota}_1(H)$ orthogonal to $v_1$.
Then $(\tilde{\delta}_1,\tilde{\delta}_1)=2-2n$, and $\tilde{\delta}_1$ is orthogonal to $\tilde{\iota}_1(T_q)$ and to $v_1$.
Furthermore, $(\tilde{\delta}_1,\lambda)$ is divisible by $2n-2$, for every class $\lambda$ of $\widetilde{\Lambda}$
orthogonal to $v_1$. Let $\iota_1:L_{q,\iota}\rightarrow \widetilde{\Lambda}$ be the restriction of 
$\tilde{\iota}_1$ to $T_q \oplus \Integers v_1$ composed with the 
inverse of the isometry 
$T_q\oplus \Integers v_1\rightarrow L_{q,\iota}=\iota(T_q)\oplus \Integers v$, restricting to $\iota$ on $T_q$ and 
sending $v_1$ to $v$.

Let $\iota_2:L_{q,\iota}\rightarrow \widetilde{\Lambda}$ be the given inclusion.
There exists an isometry $g\in O(\widetilde{\Lambda})$ satisfying 
\[
g\circ \iota_1 = \iota_2,
\] 
by \cite[Theorem I.2.9]{BHPV}. 
Set $\tilde{\iota}_2:=g\circ \tilde{\iota}_1$ and $\tilde{\delta}_2:=g(\tilde{\delta}_1)$.
Note that $\tilde{\delta}_2$ is orthogonal to $L_{q,\iota}$, since $L_{q,\iota}=\iota_2(L_{q,\iota})$, and $\tilde{\delta}_1$ is orthogonal to 
$\iota_1(L_{q,\iota})$.
Set  $\delta:=\iota^{-1}(\tilde{\delta}_2)$, where the embedding $\iota$ is given in (\ref{eq-iota}).
Then $\delta$ belongs to $\Lambda^{1,1}(q)$ and to $\Sigma$. Consequently,
$q$ belongs to $\delta^\perp\cap \Omega$ and so $q$ belongs to $\Omega_{\Sigma'}$,
for some $W$-orbit $\Sigma'$ in $\Sigma$.

\hide{
Let $\rho$ be the largest positive integer, such that $(\tilde{\delta}_1+v_1)/\rho$ is an integral class
of $\widetilde{\Lambda}$. Let $\sigma$ be the largest positive integer, such that 
$(\tilde{\delta}_1-v_1)/\sigma$ is an integral class of $\widetilde{\Lambda}$. 
Notice that $\rho$ is also the the largest positive integer, such that $(\tilde{\delta}_2+v)/\rho$ is an integral class
of $\widetilde{\Lambda}$. Similarly, $\sigma$ is the largest positive integer, such that 
$(\tilde{\delta}_2-v)/\sigma$ is an integral class of $\widetilde{\Lambda}$. 
The $W$-orbit $\Sigma'$ of $\delta$ is uniquely determined by the unordered pair 
$\{\rho,\sigma\}$, by \cite[Prop. 9.16]{markman-survey}. 
}

As we vary the choice of $v_1$ in $\tilde{\iota}_1(H)$, in the above construction, 
all possible values of the invariant $f:\Sigma\rightarrow I_n(H)/O(H)$ are obtained.
Hence, the above construction produces a class $\delta$
in every $W$-orbit $\Sigma'$ in $\Sigma$, by Lemma \ref{lemma-monodromy-invariants}.
\end{proof}

%
\section{Density in moduli}
\label{sec-density-in-moduli}
We prove Theorem \ref{thm-density-in-moduli} in this section. 
\begin{defi}
Let $X_1$ and $X_2$ be irreducible holomorphic symplectic manifolds. 
An isomorphism $g:H^2(X_1,\Integers)\rightarrow H^2(X_2,\Integers)$ 
is called a {\em parallel-transport operator}, if there exists a family 
$\pi:\X\rightarrow B$ (which may depend on $g$) of irreducible holomorphic symplectic manifolds
over an analytic base $B$, points $b_1$ and $b_2$ in $B$, 
isomorphisms $\psi_i:\X_{b_i}\rightarrow X_i$, $i=1,2$, and a continuous path
$\gamma$ from $b_1$ to $b_2$, such that parallel-transport along $\gamma$
in the local system $R^2\pi_*\Integers$ induces the isomorphism
$\psi_{2}^{*}\circ g\circ \psi_{1_*}$.
\end{defi}

A parallel transport operator is necessarily an isometry with respect to the
Beauville-Bogomolov forms, due to the topological nature of these forms
\cite{beauville}. 
The positive cone $\widetilde{\C}_{X_i}:=\{x\in H^2(X_i,\RealNumbers) \ : \ (x,x)>0\}$ is homotopic to the $2$-sphere,
and $H^2(\widetilde{\C}_{X_i},\Integers)$ comes with a natural generator, called the
{\em orientation class}
\cite[Sec. 4]{markman-survey}. An isometry
$g:H^2(X_1,\Integers)\rightarrow H^2(X_2,\Integers)$ is {\em orientation preserving},
if the induced map $\bar{g}: \widetilde{\C}_{X_1}\rightarrow \widetilde{\C}_{X_2}$ is.

The group $Mon^2(X)$ 
of parallel-transport operators from $H^2(X,\Integers)$ to itself is called the
{\em monodromy group}. Let $S$ be a $K3$ surface and $S^{[n]}$ its Hilbert scheme. Denote by 
$W(S^{[n]})$ the subgroup of the orientation-preserving isometry group of 
$H^2(S^{[n]},\Integers)$ consisting of elements acting by $\pm 1$ on
$H^2(S^{[n]},\Integers)^*/H^2(S^{[n]},\Integers)$. 
Given a marking $\eta:H^2(S^{[n]},\Integers)\rightarrow \Lambda$, 
we get the equality $W(S^{[n]})=\eta^{-1}W\eta$.

\begin{thm}\cite[Theorem 1.2 and Lemma 4.2]{markman-constraints}
\label{thm-monodromy}
The monodromy group $Mon^2(S^{[n]})$  is equal to $W(S^{[n]})$.
\end{thm}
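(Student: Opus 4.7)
The plan is to prove the two inclusions separately: the easier direction $Mon^2(S^{[n]}) \subseteq W(S^{[n]})$ imposes constraints on monodromy from the geometry of the Hilbert--Chow morphism, while the harder direction $W(S^{[n]}) \subseteq Mon^2(S^{[n]})$ produces enough monodromy operators by realizing $S^{[n]}$ as a moduli space of sheaves and invoking Fourier--Mukai theory together with the monodromy of $K3$ surfaces.

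For $Mon^2(S^{[n]}) \subseteq W(S^{[n]})$: any parallel-transport operator is an isometry of the Beauville--Bogomolov form by its topological invariance, and preserves the orientation of the positive cone by continuity along a path in the base of the family. The substantive content is that the induced action on the discriminant group $H^2(S^{[n]},\ZZ)^*/H^2(S^{[n]},\ZZ) \cong \ZZ/(2n-2)\ZZ$ is by $\pm 1$. I would prove this by singling out the half-exceptional class $\delta \in H^2(S^{[n]},\ZZ)$ attached to the Hilbert--Chow morphism, so that $2\delta$ is the class of the exceptional divisor $E$; this $\delta$ generates the discriminant and lies in the set $\Sigma$ of Section~\ref{sec-density-of-periods}. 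A parallel-transport operator must map $\delta$ to another element of $\Sigma$ generating the discriminant, and consequently acts on the discriminant generator by a unit of $\ZZ/(2n-2)\ZZ$ whose square is $1$ and whose further flexibility is captured by the $W$-orbit structure of Lemma~\ref{lemma-monodromy-invariants}; the only surviving possibilities are $\pm 1$.

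For $W(S^{[n]}) \subseteq Mon^2(S^{[n]})$: realize $S^{[n]}$ as the moduli space $\mathcal{M}_{v_0}(S)$ of Gieseker-stable sheaves on $S$ with Mukai vector $v_0=(1,0,1-n)$ in the Mukai lattice $\widetilde{\Lambda} := H^*(S,\ZZ)$, and identify $H^2(S^{[n]},\ZZ)$ with $v_0^{\perp}\subset\widetilde{\Lambda}$ via Mukai's morphism; this identification realizes the abstract primitive embedding $\iota:\Lambda\hookrightarrow\widetilde{\Lambda}$ of Section~\ref{sec-density-of-periods}. Given $h\in W(S^{[n]})$, a lattice-theoretic lifting statement (whose content is that $h\in W$ is \emph{precisely} the condition for $\iota\circ h$ to extend to an isometry of $\widetilde{\Lambda}$) produces $g\in O^+(\widetilde{\Lambda})$ with $g(v_0)=v_0$ and $g\circ\iota=\iota\circ h$. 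To realize every such $g$ by parallel transport, I would combine three ingredients: (i) the monodromy of the $K3$ surface $S$, which fills $O^+(H^2(S,\ZZ))$ by the global Torelli theorem for $K3$ surfaces; (ii) Fourier--Mukai autoequivalences of $D^b(S)$ (such as tensoring with line bundles and spherical twists along rigid sheaves), which act on $\widetilde{\Lambda}$ as Hodge isometries and deform in families of pairs $(S,v)$ to yield parallel-transport operators between the associated moduli spaces; and (iii) Yoshioka's deformation equivalence of any $\mathcal{M}_v(S,H)$ with primitive $v$ to a Hilbert scheme, which transports the monodromy obtained on these general moduli spaces back to monodromy of $S^{[n]}$.

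The main obstacle is the combination in step (ii): showing that $K3$ monodromies together with Fourier--Mukai autoequivalences jointly generate the stabilizer of $v_0$ in $O^+(\widetilde{\Lambda})$, and that the relevant derived equivalences globalize into a family of moduli spaces whose monodromy realizes the lifted isometry $g$ as honest parallel transport. This is the core of \cite[Theorem 1.2]{markman-constraints}, with the group-theoretic lifting handled by \cite[Lemma 4.2]{markman-constraints}, and the remaining verifications (orientation preservation and the $\pm 1$ action on the discriminant) follow formally from the lattice framework of Section~\ref{sec-density-of-periods}.
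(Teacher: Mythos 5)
The paper offers no argument for Theorem \ref{thm-monodromy}; it is quoted wholesale from \cite{markman-constraints} (with the generation half resting on the techniques of \cite{markman-monodromy-I}). Your sketch of the inclusion $W(S^{[n]})\subseteq Mon^2(S^{[n]})$ correctly describes the architecture of that external proof (lift $h\in W$ to the Mukai lattice, realize the lift by Fourier--Mukai equivalences, $K3$ monodromy, and deformations of moduli spaces of sheaves), but you ultimately defer its core to the very citation being proved, so that half is a summary rather than a proof --- which is defensible only in the sense that the paper itself does no more.

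The genuine gap is in the direction you present as formal, $Mon^2(S^{[n]})\subseteq W(S^{[n]})$. Your chain ``a parallel-transport operator sends $\delta$ into $\Sigma$, hence acts on the discriminant by a unit of square one, hence by $\pm 1$'' breaks at the last step: lattice theory only gives $u^2=1$, and by \cite[Theorem 1.14.2]{nikulin} every such unit of $\Integers/(2n-2)\Integers$ is realized by an orientation-preserving isometry of $\Lambda$; these units reduce to $\{\pm 1\}$ exactly when $n-1$ is a prime power (cf. \cite{oguiso}). Note also that \emph{every} isometry maps $\Sigma$ to $\Sigma$, since membership in $\Sigma$ is a lattice-theoretic condition, and the Remark following Lemma \ref{lemma-monodromy-invariants} records that $\Sigma$ splits into $2^{\rho-1}$ distinct $W$-orbits. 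So for $\rho\geq 2$ there are orientation-preserving isometries carrying $\delta$ to an element of $\Sigma$ in a different $W$-orbit, i.e.\ acting on the discriminant by a unit other than $\pm 1$; your argument cannot exclude these from the monodromy group, and excluding them is precisely the ``integral constraints'' content of \cite{markman-constraints}. The actual mechanism there is geometric rather than lattice-theoretic: one shows that every monodromy operator of $S^{[n]}$ lifts, compatibly with Mukai's isometry $\theta:v^\perp\rightarrow H^2(S^{[n]},\Integers)$, to an isometry of the full Mukai lattice $\widetilde{H}(S,\Integers)$ preserving the Mukai vector $v=(1,0,1-n)$ up to sign, an input extracted from the monodromy-equivariance of classes built from the universal ideal sheaf; only then does the $\pm 1$ action on $\Lambda^*/\Lambda\cong (\Integers v\oplus v^\perp)^*/(\Integers v\oplus v^\perp)$ follow. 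Without some such lifting argument your first inclusion fails as stated, and with it your proposal is essentially a restatement of the cited theorem rather than an independent proof.
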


Let $S$ be a $K3$ surface, $Z\subset S\times S^{[n]}$ the universal subscheme, and
$I_Z$ its ideal sheaf. The {\em Mukai lattice} $\widetilde{H}(S,\Integers)$ of $S$ 
is the integral cohomology group $H^*(X,\Integers)$ endowed with the Mukai pairing, which we now
recall. A class $\lambda$ in $\widetilde{H}(S,\Integers)$
decomposes as $\lambda=(\lambda_0,\lambda_1,\lambda_2)$, with $\lambda_i\in H^{2i}(S,\Integers)$.
The {\em Mukai pairing} is given by
\[
(\lambda,\lambda') \ \ := \ \ \int_S\left(-\lambda_0\lambda'_2+\lambda_1\lambda'_1-\lambda_2\lambda_0'\right).
\]
The Mukai lattice is isometric to $\widetilde{\Lambda}$ \cite{mukai-hodge}. 
The {\em Mukai vector} of a coherent sheaf $F$ on $S$ is $v(F):=ch(F)\sqrt{td_S}$. 
Identify $H^0(S,\Integers)$ and $H^4(S,\Integers)$ with $\Integers$ using the classes
Poincare dual to $S$ and to the class of a point. 
Then $v(F)=(r,c_1(F),s)$, where $r=\rank(F)$, and $s=\chi(F)-r$, by the Riemann-Roch Theorem.

Let $v:=(1,0,1-n)$ be the Mukai vector in $\widetilde{H}(S,\Integers)$ of the ideal sheaf
of a length $n$ subscheme. Denote by $v^\perp$ the sub-lattice of $\widetilde{H}(S,\Integers)$
orthogonal to $v$. Let $\pi_i$ be the projection from $S\times S^{[n]}$ onto the $i$-th factor, $i=1,2$. 
We get Mukai's isometry
\[
\theta \ : \ v^\perp \ \ \rightarrow \ \ H^2(S^{[n]},\Integers),
\]
sending a Mukai vector $\lambda$ to the degree $2$ summand of
$\pi_{2_*}[\pi_1^*(\lambda^\vee\sqrt{td_S}) ch(I_Z)]$, 
where $\lambda^\vee:=(\lambda_0,-\lambda_1\lambda_2)$ (see \cite{ogrady-weight-two}).
Note that  $v^\perp$ is the orthogonal direct sum of the sub-lattice
$H^2(S,\Integers)$ of $\widetilde{H}(S,\Integers)$ and $\Integers\tilde{\delta}$, where 
$\theta(\tilde{\delta})$ is half the class of the diagonal divisor in $S^{[n]}$.

\begin{lem}
\label{lemma-signed-isometry-mapping-delta-to-delta-is-a-parallel-transport}
Let $S_1$ and $S_2$ be $K3$ surfaces.
Denote by $\delta_i$ the class in $H^2(S_i^{[n]},\Integers)$, which is half the class of the
diagonal divisor in $S_i^{[n]}$.
Let $g:H^2(S_1^{[n]},\Integers)\rightarrow H^2(S_2^{[n]},\Integers)$ be an orientation preserving isometry, 
satisfying $g(\delta_1)=\delta_2$. Then $g$ is a parallel-transport operator.
\end{lem}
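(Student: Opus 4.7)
The plan is to reduce to parallel transport in a family of $K3$ surfaces via Mukai's isometry, and then to lift this to a parallel transport of the Hilbert schemes using the relative Douady space. Using the orthogonal decomposition $H^2(S_i^{[n]},\ZZ)=\theta_i(H^2(S_i,\ZZ))\oplus\ZZ\delta_i$ recalled in the excerpt, and the fact that $g(\delta_1)=\delta_2$ forces $g(\delta_1^{\perp})=\delta_2^{\perp}$, I would set
\[
\bar g \;:=\; \theta_2^{-1}\circ g\circ\theta_1 \,:\, H^2(S_1,\ZZ)\longrightarrow H^2(S_2,\ZZ).
\]
Since each $\theta_i$ is a Hodge isometry, it identifies the natural positive $3$-plane $H^{2,0}\oplus H^{0,2}\oplus\RR\omega$ on the $K3$ side with its counterpart on the Hilbert scheme side, and in particular matches the orientations of the positive cones; hence $\bar g$ inherits from $g$ the property of being an orientation-preserving isometry of $K3$ lattices.

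Next, I would invoke the classical fact that every orientation-preserving isometry between $H^2$'s of two $K3$ surfaces arises from parallel transport. Concretely, choose any marking $\eta_1$ of $S_1$ and set $\eta_2:=\eta_1\circ\bar g^{-1}$; then $(S_1,\eta_1)$ and $(S_2,\eta_2)$ lie in a common connected component $\fM^0_{\Lambda_{K3}}$ of the moduli of marked $K3$ surfaces, precisely because $\bar g$ is orientation-preserving. Connectedness of this component, together with the Kuranishi theorem, yields a smooth family $\pi\colon\mathcal S\to B$ over a connected analytic base $B$ interpolating $S_1$ and $S_2$, together with a path $\gamma$ whose parallel-transport operator on $R^2\pi_*\ZZ$ realizes $\bar g$.

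Passing to the relative Douady space $\pi^{[n]}\colon\mathcal S^{[n]}\to B$ produces a smooth family of irreducible holomorphic symplectic manifolds with fibers $S_1^{[n]}$ and $S_2^{[n]}$. Parallel transport of $R^2(\pi^{[n]})_*\ZZ$ along $\gamma$ gives an isometry $\Phi\colon H^2(S_1^{[n]},\ZZ)\to H^2(S_2^{[n]},\ZZ)$, which is a parallel-transport operator by definition. It then remains to verify $\Phi=g$. On $\ZZ\delta_1$, the diagonal divisor exists relatively in $\mathcal S^{[n]}$ and its half-class is a flat section of $R^2(\pi^{[n]})_*\ZZ$, so $\Phi(\delta_1)=\delta_2=g(\delta_1)$. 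On $\delta_1^{\perp}$, using the relative universal subscheme $\mathcal Z\subset\mathcal S\times_B\mathcal S^{[n]}$ one builds a relative version of Mukai's $\theta$, showing that $\theta$ commutes with parallel transport; this yields $\Phi\circ\theta_1=\theta_2\circ\bar g$, i.e.\ $\Phi|_{\delta_1^{\perp}}=g|_{\delta_1^{\perp}}$. Thus $\Phi$ and $g$ agree on the finite-index sublattice $\delta_1^{\perp}\oplus\ZZ\delta_1$, and being integral isometries they coincide on all of $H^2(S_1^{[n]},\ZZ)$.

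The main obstacle is this compatibility of Mukai's $\theta$ with parallel transport. One needs to globalize the pointwise formula $\theta(\lambda)=\pi_{2*}\bigl[\pi_1^*(\lambda^{\vee}\sqrt{td})\cdot ch(I_Z)\bigr]_2$ over $B$, which requires organizing the Mukai lattices $\widetilde{H}(S_b,\ZZ)$ into a local system on $B$ in which the Mukai vector $v=(1,0,1-n)$ is a flat section. The flatness of the relative universal subscheme together with the topological constancy of Chern characters along deformations supply what is needed, but this naturality is the point that should be spelled out carefully.
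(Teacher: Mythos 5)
Your proposal is correct and follows essentially the same route as the paper: restrict $g$ to $\delta_1^\perp$, transfer it via Mukai's isometries $\theta_i$ to an orientation-preserving isometry $H^2(S_1,\Integers)\rightarrow H^2(S_2,\Integers)$, realize the latter by parallel transport in a family of $K3$ surfaces (Borcea's theorem, which your marked-moduli argument merely reformulates), and lift to the Hilbert schemes via the relative Douady/Hilbert construction. The only difference is that you spell out the identification of the lifted parallel-transport operator with $g$ (flatness of the half-diagonal class plus compatibility of the relative Mukai homomorphism with parallel transport), a point the paper compresses into its final sentence.
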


\begin{proof}
The positive cone in $\delta_i^\perp\cap H^2(S^{[n]}_i,\RealNumbers)$ is again homotopic to the $2$-sphere
and the orientation class in $H^2(\C_{S^{[n]}_i},\Integers)$ restricts to an orientation class on the former cone.
The isometry $g$ restricts to an orientation preserving isometry 
$h:\delta_1^\perp\rightarrow \delta_2^\perp.$ 
Let $v_i\in \widetilde{H}(S_i,\Integers)$ be the Mukai vector of the ideal sheaf of a length $n$ subscheme.
We have the equality 
$\delta_i^\perp=\theta_i[H^2(S_i,\Integers)]$, 
where $\theta_i:v_i^\perp\rightarrow H^2(S_i^{[n]},\Integers)$ is the Mukai isometry. 
Let $\tilde{\delta}_i$ be the class in $v_i^\perp$ satisfying $\theta_i(\tilde{\delta}_i)=\delta_i$.
We see that $\theta_2^{-1}g\theta_1:v_1^\perp\rightarrow v_2^\perp$ is an isometry
$[H^2(S_1,\Integers)\oplus \Integers\tilde{\delta}_1]\rightarrow [H^2(S_2,\Integers)\oplus \Integers\tilde{\delta}_2]$ 
mapping $\tilde{\delta}_1$ to $\tilde{\delta}_2$ and mapping 
$H^2(S_1,\Integers)$ to $H^2(S_2,\Integers)$ via an orientation preserving isometry $h'$ conjugate to $h$.
Every orientation preserving isometry $h':H^2(S_1,\Integers)\rightarrow H^2(S_2,\Integers)$ is a 
parallel-transport operator, by
\cite{borcea}. 
Hence, so is $g$, since the construction of Hilbert schemes (and Douady spaces)
works in families and lifts parallel-transport operators between $K3$-surfaces
to parallel-transport operators between their Hilbert schemes.
\end{proof}

Let $S_0$ be a $K3$ surface and $(S_0^{[n]},\eta_0)$ a marked pair in $\fM^0_\Lambda$.
Such a marked pair exists, by our choice of the component $\fM^0_\Lambda$.
Let $\delta_0\in H^2(S_0^{[n]},\Integers)$ be half the class of the diagonal divisor. 
Let $\Sigma'\subset \Sigma$ be the $W$-orbit of $\eta_0(\delta_0)$. 

\begin{lem}
\label{lemma-X-is-bimeromorphic-to-a-Hilbert-scheme}
Let $(X,\eta)$ be a marked pair in $\fM^0_\Lambda$, such that its period belongs to $\Omega_{\Sigma'}$.
Then $X$ is bimeromorphic to the Hilbert scheme $S^{[n]}$ of some K\"{a}hler $K3$ surface $S$.
\end{lem}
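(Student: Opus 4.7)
The plan is to produce, at the given period $p := P(X,\eta)$, a marked pair $(S^{[n]}, \eta_S)$ lying in the same component $\fM^0_\Lambda$ with $P(S^{[n]}, \eta_S) = p$. Once this is in hand, Verbitsky's result that fibers of the period map consist of non-separated points, combined with the Huybrechts theorem that non-separated marked pairs are bimeromorphic, will yield that $X$ is bimeromorphic to $S^{[n]}$.

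By hypothesis, there exists $\delta' \in \Lambda^{1,1}(p) \cap \Sigma'$. Since $\delta' \in \Sigma$, its orthogonal $\delta'^\perp \subset \Lambda$ is isometric to $\Lambda_{K3}$, and the conditions defining $\Omega$ imply that $p$ sits in $\delta'^\perp \otimes_\Integers \ComplexNumbers$ as a point of the $K3$ period domain. Surjectivity of the period map for K\"ahler $K3$ surfaces then produces a K\"ahler $K3$ surface $S$ together with an isometric marking $\phi : H^2(S,\Integers) \to \delta'^\perp$ sending $H^{2,0}(S)$ to the line $p$; by modifying $\phi$ if necessary, I may take it to be orientation-preserving with respect to the orientation on the positive cone of $\Lambda$ induced by the reference marking $\eta_0$.

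To promote this to a marking of $S^{[n]}$, I use Mukai's isometry $\theta_S : v_S^\perp \to H^2(S^{[n]},\Integers)$, where $v_S = (1,0,1-n)$, $v_S^\perp = H^2(S,\Integers) \oplus \Integers \tilde\delta_S$ orthogonally, $\theta_S(\tilde\delta_S) = \delta_S$, and $(\tilde\delta_S,\tilde\delta_S) = 2-2n = (\delta',\delta')$. The formula $\tilde\phi(\alpha + k\tilde\delta_S) := \phi(\alpha) + k\delta'$ defines an isometric embedding $v_S^\perp \hookrightarrow \Lambda$ with image $\delta'^\perp \oplus \Integers\delta'$. A discriminant comparison (both $\Lambda$ and $\delta'^\perp \oplus \Integers\delta'$ have discriminant of absolute value $2n-2$, and both have rank $23$) forces this sublattice to equal $\Lambda$, so $\tilde\phi$ is an isomorphism of lattices. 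Set $\eta_S := \tilde\phi \circ \theta_S^{-1}$: this is a marking of $S^{[n]}$ with $\eta_S(\delta_S) = \delta'$, and since $\theta_S$ is a Hodge isometry, $P(S^{[n]},\eta_S) = p$.

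It remains to check $(S^{[n]}, \eta_S) \in \fM^0_\Lambda$. By definition of $\Sigma'$, pick $w \in W$ with $w(\eta_0(\delta_0)) = \delta'$; Theorem \ref{thm-monodromy} gives $W = \eta_0 \cdot Mon^2(S_0^{[n]}) \cdot \eta_0^{-1}$, so $(S_0^{[n]}, w\circ\eta_0)$ is still in $\fM^0_\Lambda$. The isometry $g := \eta_S^{-1} \circ w \circ \eta_0 : H^2(S_0^{[n]},\Integers) \to H^2(S^{[n]},\Integers)$ sends $\delta_0$ to $\delta_S$, and is orientation-preserving because $\eta_0, \eta_S$ are orientation-preserving markings and $w \in W \subset O^+(\Lambda)$. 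Lemma \ref{lemma-signed-isometry-mapping-delta-to-delta-is-a-parallel-transport} then shows $g$ is a parallel-transport operator, placing $(S^{[n]}, \eta_S)$ in the same component of $\fM_\Lambda$ as $(S_0^{[n]}, w\circ\eta_0)$, namely $\fM^0_\Lambda$. Combined with the Verbitsky--Huybrechts input from the first paragraph, this finishes the argument. The step I expect to require the most care is the orientation bookkeeping here: one must track the orientation of the positive cone through the constructions of $\phi$, $\tilde\phi$, and $\eta_S$ to be sure the resulting marking really lands in $\fM^0_\Lambda$ rather than the opposite component, a flexibility absorbed into the freedom to replace $\phi$ by $-\phi$.
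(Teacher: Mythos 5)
Your proposal is correct and is essentially the paper's own argument: the same ingredients appear in the same roles (surjectivity of the period map for K\"{a}hler $K3$ surfaces applied to $\delta'^\perp\cong\Lambda_{K3}$, the orthogonal splitting of $H^2(S^{[n]},\Integers)$ into $\theta(H^2(S,\Integers))$ and $\Integers\delta_1$ via Mukai's isometry, the sign change $\phi\mapsto-\phi$ to fix orientations, and the combination of Lemma \ref{lemma-signed-isometry-mapping-delta-to-delta-is-a-parallel-transport} with Theorem \ref{thm-monodromy} to handle the $W$-orbit). The only difference is the endgame: you realize $(S^{[n]},\eta_S)$ as a point of $\fM^0_\Lambda$ over the same period and quote Verbitsky's inseparability of the fibers of the period map on a connected component together with Huybrechts' theorem that inseparable marked pairs are bimeromorphic, whereas the paper packages the same data as a parallel-transport Hodge isometry $\gamma_1:H^2(S^{[n]},\Integers)\rightarrow H^2(X,\Integers)$ and cites the Hodge-theoretic Torelli Theorem \cite[Theorem 1.3]{markman-survey} --- equivalent formulations of the same global Torelli input.
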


\begin{proof}
There exists a class $\delta'\in \Sigma'$, such that $P(X,\eta)$ is orthogonal to $\delta'$,
by our assumption on $(X,\eta)$. Set $\delta:=\eta^{-1}(\delta')$. Then $\delta$ is of Hodge type
$(1,1)$ and the sub-lattice $\delta^\perp\subset H^2(X,\Integers)$ is isometric to the $K3$ lattice
$\Lambda_{K3}$. Hence, there exists a $K3$ surface $S$ and a Hodge isometry
$\gamma:H^2(S,\Integers)\rightarrow \delta^\perp$, by the surjectivity of the period map for K\"{a}hler $K3$ surfaces
\cite{siu,todorov}. 
Let $\delta_1\in H^2(S^{[n]},\Integers)$ be half the class of the diagonal divisor.
Extend $\gamma$ to a Hodge isometry 
\begin{equation}
\label{eq-gamma-1}
\gamma_1 \ : \ H^2(S^{[n]},\Integers) \ \ \ \rightarrow \ \ \  H^2(X,\Integers),
\end{equation}
by sending $\delta_1$ to $\delta$. 
We may assume that $\gamma_1$ is orientation preserving, possibly after replacing $\gamma$ with $-\gamma$ 
in the above construction.

We prove next that $\gamma_1$ is a parallel-transport operator. 
Set 
\[
\psi:=\eta_0^{-1}\eta\circ \gamma_1:H^2(S^{[n]},\Integers)\rightarrow H^2(S_0^{[n]},\Integers).
\]
Now $\psi(\delta_1)=\eta_0^{-1}\eta(\delta)$
belongs to the $W(S^{[n]}_0)$-orbit of $\delta_0$, since $\eta_0(\delta_0)$ and $\eta(\delta)$ both belong to the 
$W$-orbit $\Sigma'$.
Let $w$ be an element of $W(S_0^{[n]})$ satisfying 
$w\psi(\delta_1)=\delta_0$. Then $w\psi$ is a parallel-transport operator, by
Lemma \ref{lemma-signed-isometry-mapping-delta-to-delta-is-a-parallel-transport}. 
Now $w$ is a parallel-transport operator, by Theorem \ref{thm-monodromy}.
Hence, $\psi$ is a parallel-transport operator as well. We know that $\eta_0^{-1}\eta$
is a parallel-transport operator, since the marked pairs $(S_0^{[n]},\eta_0)$ and
$(X,\eta)$ belong to the same connected component $\fM^0_\Lambda$. Hence, 
$\gamma_1$ is a parallel-transport operator.

Verbitsky's Hodge theoretic Torelli Theorem states that two irreducible holomorphic symplectic 
manifolds $X$ and $Y$ are bimeromorphic, if and only if there exists 
a parallel-transport operator from $H^2(X,\Integers)$ to $H^2(Y,\Integers)$,
which is an isomorphism of Hodge structures (see \cite{verbitsky,huybrechts-bourbaki} and  
\cite[Theorem 1.3]{markman-survey} for this specific statement). Hence, $X$ and $S^{[n]}$ 
are bimeromorphic. 
\end{proof}

\begin{proof} (Of Theorem \ref{thm-density-in-moduli}).
Let $B$ be the subset of $\Omega_{\Sigma'}$ consisting of periods $p$,
such that $\Lambda^{1,1}(p)$ has rank $1$. Then $B$ is dense in $\Omega_{\Sigma'}$
and hence also in the period domain $\Omega$, by Lemma \ref{lemma-Omega-orbit-in-Sigma-is-dense}. 
The Local  Torelli Theorem implies that $P^{-1}(B)$ is a dense  subset of
$\fM_\Lambda$. 

Let $p$ be a point of $B$. 
Then $\Lambda^{1,1}(p)$ is spanned by a class $\delta'\in\Sigma'$. 
Hence, every marked pair $(X,\eta)$ in the
fiber of the period map $P:\fM^0_\Lambda\rightarrow \Omega_\Lambda$
over $p$ satisfies $H^{1,1}(X,\Integers)=\Integers \eta^{-1}(\delta')$.
Set $\delta:=\eta^{-1}(\delta')$. 
Precisely one of the classes $2\delta$ or $-2\delta$ is effective, since $X$ is bimeromorphic to $S^{[n]}$,
for some $K3$ surface $S$, by Lemma \ref{lemma-X-is-bimeromorphic-to-a-Hilbert-scheme}. 
The classes $\delta'$ and $-\delta'$ belong to the same $W$-orbit, since the reflection
$R_{\delta'}$, given by $R_{\delta'}(\lambda)=\lambda-2\frac{(\lambda,\delta')}{(\delta',\delta')}\delta'$,
belongs to $W$ and $R_{\delta'}(\delta')=-\delta'$.
We may assume that $2\delta$ is effective, possibly after replacing $\delta'$ by $-\delta'$ 
above. 
A class $\alpha$ in the positive cone of $X$ is  a K\"{a}hler class, 
if and only if $(\alpha,\delta)>0$,  since $H^{1,1}(X,\RationalNumbers)$
has rank $1$, and it is spanned by the class of an effective divisor
\cite[Theorem 4.3]{huybrechts-kahler-cone,boucksom-kahler-cone}. 

Let $\kappa$ be a K\"{a}hler class on $S^{[n]}$ and 
$\gamma_1:H^2(S^{[n]},\Integers) \rightarrow H^2(X,\Integers)$ the parallel-transport Hodge isomorphism 
constructed in equation (\ref{eq-gamma-1}). Let $R_{\delta_1}:H^2(S^{[n]},\Integers)\rightarrow H^2(S^{[n]},\Integers)$
be the reflection with respect to the class $\delta_1$ (half the class of the diagonal divisor). 
Let $R_\delta:H^2(X,\Integers)\rightarrow H^2(X,\Integers)$ be the reflection by $\delta$.
Then $R_\delta(\delta)=-\delta$ and $R_\delta\gamma_1=\gamma_1R_{\delta_1}$.
We see that 
one of $\gamma_1(\kappa)$ or $\gamma_1R_{\delta_1}(\kappa)$ is a K\"{a}hler class on $X$.
Now $R_{\delta_1}$ belongs to $W(S^{[n]})$ and is thus a parallel-transport operator,
by Theorem \ref{thm-monodromy}. Hence, 
there exists a parallel-transport operator from $H^2(S^{[n]},\Integers)$ to $H^2(X,\Integers)$,
which is an isomorphism of Hodge structures, and which
maps a K\"{a}hler class on $S^{[n]}$ to a K\"{a}hler class on $X$. Thus, $X$ is isomorphic to $S^{[n]}$,
by the Hodge-theoretic Torelli Theorem \cite[Theorem 1.3]{markman-survey}.

Let $Def(S)$ be the Kuranishi deformation space of $S$ and ${\mathfrak S}\rightarrow Def(S)$ a universal family.
We assume that $Def(S)$ is simply connected, possibly after replacing it by an open neighborhood
of the point $0\in Def(S)$ parametrizing $S$. The isomorphism $X\cong S^{[n]}$ and the 
relative Hilbert scheme ${\mathfrak S}^{[n]}\rightarrow Def(S)$
induce a natural open morphism $Def(S)\rightarrow \left[\fM^0_\Lambda\cap P^{-1}((\delta')^\perp)\right]$,
mapping $0$ to $(X,\eta)$. The locus of projective $K3$ surfaces is dense in $Def(S)$.
Hence,  any open neighborhood of $(X,\eta)$ in 
$\fM^0_\Lambda$ contains a marked projective Hilbert scheme ${\mathfrak S}^{[n]}_t$, 
for some $t\in Def(S)$.
\end{proof}

%
\section{Density of generalized Kummer varieties}
\label{sec-density-of-kummers}

Let $S$ be a two dimensional compact complex torus, $s_0\in S$ its origin, $n\geq 2$ an integer,
$S^{(n+1)}$ the $n+1$ symmetric product of $S$, and $S^{(n+1)}\rightarrow S$ the summation morphism.
Consider the composition $\pi:S^{[n+1]}\rightarrow S$ of the Hilbert-Chow 
morphism $S^{[n+1]}\rightarrow S^{(n+1)}$ with the summation morphism.
The fiber $K^{[n]}(S)$ of $\pi$ over $s_0$ is a
$2n$-dimensional irreducible holomorphic symplectic manifold, called
a {\em generalized Kummer variety} \cite{beauville}.

Let $\Lambda$ be the lattice $H\oplus H \oplus H \oplus \langle -2-2n\rangle$.
Let $X$ be an irreducible holomorphic symplectic manifold deformation equivalent to 
$K^{[n]}(S)$. Then $H^2(X,\Integers)$, endowed with its Beauville-Bogomolov form, 
is isometric to $\Lambda$, by \cite{beauville}. 
Let $\fM^0_\Lambda$ be a connected component of the moduli space $\fM_\Lambda$, 
containing a marked pair $(K^{[n]}(S_0),\eta_0)$.

\begin{thm}
\label{thm-density-of-kummers}
The locus in $\fM^0_\Lambda$, consisting of marked pairs $(X,\eta)$, where $X$ is isomorphic
to the generalized Kummer variety
$K^{[n]}(S)$, for some abelian surface $S$, is dense in $\fM^0_\Lambda$.
\end{thm}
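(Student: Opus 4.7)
The plan is to mirror the three-step strategy used for Theorem \ref{thm-density-in-moduli}, with complex $2$-tori and Yoshioka's Mukai-type isometry for generalized Kummer varieties playing the roles of $K3$ surfaces and Mukai's Hilbert-scheme isometry.

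First, I would establish the lattice analog of Section \ref{sec-density-of-periods}. Fix a primitive embedding $\iota : \Lambda \hookrightarrow \widetilde{\Lambda}_K := H^{\oplus 4}$ whose orthogonal is generated by a class $v$ with $(v,v) = 2n+2$, and let $\Sigma \subset \Lambda$ be the set of primitive $\delta$ with $(\delta,\delta) = -2-2n$ such that $\delta^\perp$ is isometric to $H^{\oplus 3}$, the $H^2$-lattice of an abelian surface. The same perturbation argument as in Lemma \ref{lemma-A-is-dense} produces a dense set $A \subset \Omega$ of periods $q$ for which $\iota(T_q) + \Integers v$ is saturated in $\widetilde{\Lambda}_K$; Nikulin's uniqueness of primitive embeddings combined with the analog of Lemma \ref{lemma-monodromy-invariants} then yields density of $\Omega_{\Sigma'}$ in $\Omega$ for every $W$-orbit $\Sigma' \subset \Sigma$.

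Next, I would prove the Kummer analogs of Lemmas \ref{lemma-signed-isometry-mapping-delta-to-delta-is-a-parallel-transport} and \ref{lemma-X-is-bimeromorphic-to-a-Hilbert-scheme}. For a complex torus $S$ with Mukai vector $v_0 := (1,0,1-n) \in \widetilde{H}(S,\Integers)$, Yoshioka's variant of the theta-map supplies a Hodge isometry $\theta : v_0^\perp \to H^2(K^{[n]}(S),\Integers)$ sending a canonical generator $\tilde{\delta}$ to half the class $\delta$ of the diagonal divisor. Surjectivity of the period map for complex $2$-tori together with Shioda's theorem---every orientation-preserving isometry of $H^2$ of a $2$-torus is parallel-transport---combined with the relative construction of $K^{[n]}$ in families, show that any orientation-preserving Hodge isometry $g : H^2(K^{[n]}(S_1),\Integers) \to H^2(K^{[n]}(S_2),\Integers)$ matching the half-diagonal classes is a parallel-transport operator. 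Combined with the inclusion $\mathrm{Mon}^2(K^{[n]}(S)) \subseteq W(K^{[n]}(S))$ and Corollary \ref{cor-monodromy-of-generalized-kummers-when-n+1-is-a-prime-power} in the prime-power case, this produces, for any $(X,\eta)$ with period in $\Omega_{\Sigma'}$, a parallel-transport Hodge isomorphism $\gamma_1 : H^2(K^{[n]}(S),\Integers) \to H^2(X,\Integers)$ sending $\delta_1$ to $\delta := \eta^{-1}(\delta')$, so that $X$ is bimeromorphic to $K^{[n]}(S)$ by Verbitsky's Torelli theorem.

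The conclusion will then copy the $K3^{[n]}$ argument almost verbatim. At generic periods in $\Omega_{\Sigma'}$ the Picard rank of $X$ equals $1$, exactly one of $\pm 2\delta$ is effective (by the bimeromorphism to a Kummer), and the K\"{a}hler cone is cut out by $(\cdot,\delta) > 0$; composing $\gamma_1$ with the reflection $R_{\delta_1} \in W(K^{[n]}(S)) = \mathrm{Mon}^2$ if necessary matches K\"{a}hler cones, whence Verbitsky's Torelli upgrades the bimeromorphism to an isomorphism $X \cong K^{[n]}(S)$. Since projective abelian surfaces are dense in the Kuranishi space of $S$, the induced map $\mathrm{Def}(S) \to \fM^0_\Lambda$ produces a marked projective Kummer arbitrarily close to $(X,\eta)$. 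The main obstacle will be the second step: in the $K3^{[n]}$ case one invokes Borcea's theorem off the shelf, but for complex $2$-tori not every isometry of $H^2$ lifts to an isomorphism of tori, and the relative Kummer construction is more delicate in the non-algebraic range, so bridging this gap---which is why the explicit monodromy computation of the prime-power corollary is needed, and why extra lattice-theoretic bookkeeping must cover the remaining $n$---is where most of the work sits.
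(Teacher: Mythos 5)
There is a genuine gap, and it sits exactly where you locate ``most of the work'': your second step rests on the assertion that every orientation-preserving isometry of $H^2$ of a complex $2$-torus is a parallel-transport operator (attributed to Shioda), and this is false. For a torus $S$, parallel transport on $H^2(S,\Integers)\cong\Wedge{2}H^1(S,\Integers)$ is induced from $H^1$, so only isometries in the image of $SL(H^1(S,\Integers))$, i.e.\ those preserving the orientations of \emph{both} the positive and negative cones, arise this way. The duality isometry of Lemma \ref{lemma-phi-does-not-lift} preserves the positive-cone orientation but reverses the negative-cone orientation and does not lift; the induced isometry $h\circ\tau$ of $H^2(K^{[n]}(S),\Integers)$ is orientation preserving, fixes the half-diagonal class $\delta$, and lies in $\W(X)$, yet it is \emph{not} a parallel-transport operator --- otherwise $K^{[n]}(S)$ and $K^{[n]}(S^*)$ would always be bimeromorphic, contradicting Namikawa (Remark \ref{rem-tau-not-parallel}, Corollary \ref{cor-monodromy-of-generalized-kummers-when-n+1-is-a-prime-power}(\ref{cor-item-intersection-is-N})). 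Hence the verbatim analogue of Lemma \ref{lemma-signed-isometry-mapping-delta-to-delta-is-a-parallel-transport} that your construction of $\gamma_1$ relies on is false, and your appeal to ``$\Mon^2(K^{[n]}(S))\subseteq W(K^{[n]}(S))$'' is the wrong direction of inclusion: what the argument needs is that a sufficiently large group of isometries is \emph{contained in} the monodromy group.

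What is actually needed, and what the paper supplies, is: (i) replace $W$ throughout by the index-two subgroup $\N=\ker(\det\cdot\chi)\subset\W$, and check that $\N$-orbits in $\Sigma$ coincide with $\W$-orbits (Lemma \ref{lemma-monodromy-invariants-for-kummers}, by composing with a reflection $R_e$, $(e,e)=-2$, $e\in\delta^\perp$), so the density of $\Omega_{\Sigma'}$ is unaffected; (ii) the dichotomy of Proposition \ref{prop-when-is-an-isometry-of-generalized-kummers-a-parallel-transport}: if the restriction of $g$ to $\delta_1^\perp$ lifts to an oriented isomorphism of $H^1$'s then $g$ is a parallel-transport operator, and otherwise $\tau\circ g$ is, so $X$ is bimeromorphic to $K^{[n]}(S_2^*)$ --- still a generalized Kummer, which rescues the conclusion; and (iii) the nontrivial input $\N(X)\subseteq \Mon^2(X)$ for \emph{all} $n$ (Theorem \ref{thm-monodromy-of-kummers}, via Yoshioka's Fourier--Mukai equivalences), of which the prime-power Corollary is a consequence, not a substitute; restricting to prime-power $n+1$ neither proves this inclusion nor covers the theorem. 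The step matching K\"ahler cones survives because $R_{\delta}$ acts by $-1$ on the discriminant group (as $\delta^\perp\cong H^{\oplus 3}$), hence lies in $\N$ and is a monodromy operator once (iii) is in place. (Minor: the Mukai vector for $K^{[n]}(S)\subset S^{[n+1]}$ is $(1,0,-1-n)$, not $(1,0,1-n)$.)
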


Theorem \ref{thm-density-of-kummers} is proven at the end of this section.
The proof will depend on 
the following statement.  
Let $\W$ be the subgroup of $O^+(\Lambda)$ acting by $\pm 1$ on
$\Lambda^*/\Lambda$. Denote by $\chi:\W\rightarrow \{\pm 1\}$
the character corresponding to the action on $\Lambda^*/\Lambda$
and let $\det:\W\rightarrow \{\pm 1\}$ be the determinant character.
Let $\N$ be the kernel of the  product character $(\det\cdot \chi):\W\rightarrow \{\pm 1\}$.
Note that $\N$ is a normal subgroup of $O(\Lambda)$. 
Hence, $\N$ determines a well defined subgroup of any lattice isometric to $\Lambda$.
Denote by 
\[
\N(X) 
\]
the corresponding normal subgroup of the isometry group of $H^2(X,\Integers)$.

\begin{thm}
\label{thm-monodromy-of-kummers}
The subgroup $\N(X)$ is contained in the monodromy group $Mon^2(X)$.
\end{thm}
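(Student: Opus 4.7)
The plan is to adapt to the generalized Kummer setting the strategy that produces Theorem \ref{thm-monodromy} for Hilbert schemes of $K3$ surfaces. Since $Mon^2(X)$ depends only on the deformation class of $X$, I may work with the particular model $X = K^{[n]}(A)$ for a fixed abelian surface $A$, and aim to realize every element of $\N(X)$ as a parallel-transport operator. The central tool is Yoshioka's theory of moduli spaces of sheaves on abelian surfaces.

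I would first invoke Yoshioka's theorem: for a primitive Mukai vector $v \in \widetilde{H}(A, \Integers)$ with $(v,v) = 2n$ and a $v$-generic polarization $H$, the Albanese fiber $K_H(v) \subset M_H(v)$ of the moduli space of $H$-stable sheaves is an irreducible holomorphic symplectic manifold deformation equivalent to $K^{[n]}$, equipped with a Hodge isometry
\[
\theta_v \ : \ v^\perp \ \longrightarrow \ H^2(K_H(v), \Integers).
\]
For $v = (1, 0, -n)$ this recovers $X = K^{[n]}(A)$ together with its Mukai isometry. Parallel-transport operators between different $K_H(v)$ then arise from two mechanisms: algebraic deformations of $(A, H, v)$ within the connected moduli of polarized abelian surfaces equipped with a Mukai vector, and Fourier--Mukai equivalences $\Phi: D^b(A) \to D^b(A')$ with $A'$ either $A$ itself or its dual $\hat A$. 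Such a $\Phi$ induces a Hodge isometry $\phi_* : \widetilde{H}(A, \Integers) \to \widetilde{H}(A', \Integers)$, sends $v$ to a Mukai vector of the same square, and yields a birational identification between $M_H(v)$ and $M_{H'}(\phi_* v)$ from which one extracts a parallel-transport operator between the two Albanese fibers by deforming through the birational locus as in the $K3$ case.

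Composing each such operator with the Mukai isometries $\theta_v$ and $\theta_{\phi_* v}^{-1}$ translates it into an element of $Mon^2(X)$. The remaining, and most delicate, step is to show that the subgroup of $Mon^2(X)$ so obtained contains all of $\N(X)$. For this I would exhibit a small generating set of $\N$ analogous to the one used for $W$ in \cite{markman-constraints}, consisting of the reflection $R_\delta$ in the class $\delta$ of half the exceptional divisor, reflections in $(-2)$-classes of the Hodge $(1,1)$ part of $v^\perp$ (realized via Mukai flops as in the $K3$ case), and further elements produced by the Fourier--Mukai group that act non-trivially on the discriminant $\Lambda^*/\Lambda$. The main obstacle is precisely this last verification: $\N$ is cut out inside $\W$ by the single parity condition $\det \cdot \chi = 1$, so one must carefully track both characters on each candidate generator, using Orlov's description of the autoequivalence group of $D^b(A)$ to confirm that Fourier--Mukai induced monodromies satisfy exactly this parity and that together with the geometric reflections they generate all of $\N$, rather than a proper subgroup.
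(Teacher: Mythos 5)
You have not given a proof; you have given the plan that the paper itself only gestures at. Note first that the paper contains no proof of Theorem \ref{thm-monodromy-of-kummers}: it states that the proof is similar to that of \cite[Cor.\ 1.8]{markman-monodromy-I}, relies on Yoshioka's stability-preserving Fourier--Mukai transformations between abelian surfaces \cite{yoshioka-abelian-surface}, and will appear elsewhere. Your outline (Albanese fibres $K_H(v)$ with their Mukai isometries $\theta_v$, deformations of $(A,H,v)$, Fourier--Mukai equivalences with $A$ or $\hat{A}$) matches that indicated strategy, but the step that carries all the content --- showing that the parallel-transport operators so produced generate the whole of $\N(X)$, which requires computing both characters $\det$ and $\chi$ on each candidate operator and a lattice-theoretic generation argument --- is exactly the step you label ``the main obstacle'' and leave undone. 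As it stands, nothing is proved.

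Moreover, the generating set you propose is partly wrong, in a way that shows the parity issue cannot be postponed. A reflection $R_e$ in a $(-2)$-class of divisibility $1$ acts trivially on $\Lambda^*/\Lambda$ and has determinant $-1$, so $(\det\cdot\chi)(R_e)=-1$ and $R_e\in\W\setminus\N$ (this is used in the proof of Lemma \ref{lemma-monodromy-invariants-for-kummers}); by part (\ref{cor-item-intersection-is-N}) of Corollary \ref{cor-monodromy-of-generalized-kummers-when-n+1-is-a-prime-power}, such a reflection is \emph{not} a monodromy operator of a generalized Kummer variety. Hence ``reflections in $(-2)$-classes realized via Mukai flops as in the $K3$ case'' cannot serve as generators here; this is precisely where the Kummer case diverges from $S^{[n]}$. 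Likewise, Fourier--Mukai equivalences $D^b(A)\rightarrow D^b(\hat{A})$ do not automatically yield monodromy: the Hodge isometry $\tau$ of Remark \ref{rem-tau-not-parallel} preserves the orientation of the positive cones yet fails to be a parallel-transport operator (Namikawa), and deciding which FM-induced isometries do contribute requires the half-spin orientation bookkeeping of Lemma \ref{lemma-phi-does-not-lift} and Proposition \ref{prop-when-is-an-isometry-of-generalized-kummers-a-parallel-transport}. These verifications are the actual substance of the theorem, and your proposal defers all of them.
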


Theorem \ref{thm-monodromy-of-kummers} was proven by the first author. 
Its proof   is similar to that 
of \cite[Corollary 1.8]{markman-monodromy-I} and will appear elsewhere.
The proof relies on examples of Yoshioka of 
stability preserving Fourier-Mukai transformations
between abelian surfaces \cite{yoshioka-abelian-surface}.

Let $\Sigma\subset \Lambda$ be the subset 
consisting of primitive classes $\delta$ satisfying $(\delta,\delta)=-2-2n$
and such that $(\delta,\lambda)$ is divisible by $2n+2$, for every class $\lambda$ in $\Lambda$.
Let $\Lambda_T$ be the orthogonal direct sum  of three copies of $H$.
The class $\delta$ belongs to $\Sigma$, if and only if $\delta^\perp$ is isomorphic to $\Lambda_T$
\cite[Lemma 7.1]{markman-prime-exceptional}. Hence, the reflection
$R_\delta\in O(\Lambda)$ acts by $-1$ on $\Lambda^*/\Lambda$, and so $R_\delta$
belongs to $\N$.

Let $\widetilde{\Lambda}$ be the orthogonal direct sum of four copies of $H$.
Choose a primitive embedding $\iota:\Lambda\hookrightarrow \widetilde{\Lambda}$.
Given a class $\delta$ in $\Sigma$ denote again by $H_{\iota,\delta}$ the saturation
in $\widetilde{\Lambda}$ of the sublattice generated by $\iota(\delta)$ and $\iota(\Lambda)^\perp$.
Again $H_{\iota,\delta}$ is isometric to $H$.
Let $J_n$ be the subset of $H$ consisting of primitive classes $\delta$, such that $(\delta,\delta)=-2-2n$.
Let $J_n/O(H)$ be the orbit set. Define
\[
f \ : \ \Sigma \ \ \ \rightarrow \ \ \ J_n/O(H)
\]
by sending $\delta\in\Sigma$ to the isometry class of $(H_{\iota,\delta},\iota(\delta))$.

\begin{lem}
\label{lemma-monodromy-invariants-for-kummers}
Two classes $\delta_1$ and $\delta_2$ in $\Sigma$ belong to the same $\N$-orbit, 
if and only if $f(\delta_1)=f(\delta_2)$. The map $f$ is surjective.
\end{lem}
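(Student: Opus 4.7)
The plan is to mimic the structure of the analogous $K3$ statement Lemma \ref{lemma-monodromy-invariants}, inserting one additional step to pass from the $\W$-orbit classification to the finer $\N$-orbit classification. Write $\iota(\Lambda)^{\perp} = \Integers v$ with $(v,v) = 2n+2$. Then $\iota(\Lambda) \oplus \Integers v$ sits inside the unimodular lattice $\widetilde{\Lambda}$ with index $2n+2$, and its glue vector projects to anti-isometric generators of the two discriminant groups $\Lambda^{*}/\Lambda$ and $(\Integers v)^{*}/\Integers v$. Consequently every $g \in \W$ lifts uniquely to $\tilde{g} \in O(\widetilde{\Lambda})$ with $\tilde{g}(v) = \chi(g) v$, and a short calculation yields $\det(\tilde{g}) = \det(g) \cdot \chi(g)$. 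In particular, $g \in \N$ if and only if $\tilde{g} \in SO(\widetilde{\Lambda})$; since $\tilde g$ carries $H_{\iota,\delta}$ onto $H_{\iota,g(\delta)}$ with $\iota(\delta) \mapsto \iota(g(\delta))$, the invariant $f$ is constant on $\N$-orbits.

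Surjectivity proceeds in parallel with the $K3$ case: given a class $[(H',\delta')] \in J_n/O(H)$, Nikulin's existence theorem produces a primitive embedding $H' \hookrightarrow \widetilde{\Lambda}$ taking a chosen primitive element of $H'$ of square $2n+2$ to $v$; then $\delta := \iota^{-1}(\delta') \in \Sigma$ satisfies $f(\delta) = [(H',\delta')]$.

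For injectivity, suppose $f(\delta_1) = f(\delta_2)$. An isometry of pairs $(H_{\iota,\delta_1}, \iota(\delta_1)) \to (H_{\iota,\delta_2}, \iota(\delta_2))$ can, after composition with an element of $O(H_{\iota,\delta_2}) \cong (\Integers/2)^{2}$ fixing $\iota(\delta_2)$, be arranged to send $v$ to $\pm v$. Nikulin's uniqueness of primitive embeddings extends it to an isometry of $\widetilde{\Lambda}$ preserving $\iota(\Lambda) = v^{\perp}$, whose restriction $g$ satisfies $g(\delta_1) = \delta_2$ and lies in $\W$ after suitable sign adjustments. This already gives $\W$-orbit equality; what remains is to modify $g$ into an element of $\N$.

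The main obstacle is this final adjustment. Since $\delta_2 \in \Sigma$ and $(\delta_2,\delta_2) = -2-2n$, a computation of self-pairings shows that $\delta_2/(2n+2)$ represents a generator of $\Lambda^{*}/\Lambda$, so any $h \in \W$ stabilizing $\delta_2$ must act trivially on $\Lambda^{*}/\Lambda$; hence $\chi(h) = 1$ and $(\det \cdot \chi)(h) = \det(h)$. It therefore suffices to exhibit $h \in \W$ fixing $\delta_2$ with $\det(h) = -1$, since composing $g$ with such an $h$ flips the value of $\det \cdot \chi$. Because $\delta_2^{\perp} \cong H^{\oplus 3}$ is unimodular, any isometry of $\delta_2^{\perp}$ extended by the identity on $\Integers \delta_2$ gives an isometry of $\Lambda$ fixing $\delta_2$. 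Take the involution on a single hyperbolic summand exchanging its isotropic generators: it has determinant $-1$, and a direct spinor-norm calculation confirms that it preserves the orientation of the positive cone $\C_\Lambda$, placing it in $\W$. This produces the element of $\N$ needed to complete the proof.
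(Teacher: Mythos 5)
Your proof is correct and, at its core, takes the same route as the paper: the passage from $\W$-orbits to $\N$-orbits is achieved by composing with an orientation-preserving, determinant $-1$ isometry that fixes $\delta$ and acts trivially on $\Lambda^*/\Lambda$ --- your swap of the isotropic generators of a hyperbolic summand of $\delta^\perp\cong H^{\oplus 3}$ is precisely the reflection $R_e$ in a $(-2)$-class $e\in\delta^\perp$ used in the paper's proof. The only difference is that you re-derive the $\W$-orbit classification (constancy of $f$, surjectivity, and $\W$-orbit injectivity) via lifting to $O(\widetilde{\Lambda})$ and Nikulin-type embedding arguments, whereas the paper takes this part to be identical to the cited Lemma \ref{lemma-monodromy-invariants}.
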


\begin{proof}
$\N$ is an index $2$ subgroup of $\W$.
Each $\W$-orbit in $\Sigma$ is also an $\N$-orbit. Indeed, given $\delta\in \Sigma$
and $w\in \W\setminus\N$,
choose  a class $e\in \delta^\perp$ satisfying $(e,e)=-2$. Then $w\circ R_e$ belongs to $\N$
and $w(R_e(\delta))=w(\delta)$. The proof now is identical to that of 
Lemma \ref{lemma-monodromy-invariants}.
\end{proof}

%

Let $\Sigma'$ be an $\N$-orbit in $\Sigma$.
Define  the set $\Omega_{\Sigma'}$ exactly the same way the set $\Omega_{\Sigma'}$ 
was defined in section \ref{sec-density-of-periods}.
Lemma \ref{lemma-Omega-orbit-in-Sigma-is-dense},
stating that $\Omega_{\Sigma'}$ is dense in $\Omega$, now holds for every $\N$-orbit $\Sigma'$ instead of every
$W$-orbit.
The proof is identical, replacing 
the use of Lemma \ref{lemma-monodromy-invariants} by that of 
Lemma \ref{lemma-monodromy-invariants-for-kummers}.

Let $V$ be a free abelian group of rank $4$, $V^*$ the dual group, $\omega\in \ \Wedge{4}V$ a generator
and $\omega^*\in \ \Wedge{4}V^*$ the dual generator. 
Endow $\Wedge{2}V$ with the pairing $(\alpha,\beta)=\omega^*(\alpha\wedge\beta)$.
Then $\Wedge{2}V$ is isometric to the orthogonal direct sum of three copies of $H$. Let 
\begin{equation}
\label{eq-phi}
\phi \ : \ \ \Wedge{2}V \ \ \rightarrow  \ \ \Wedge{2}V^*
\end{equation}
be the isomorphism given by $\phi(\alpha)(\bullet)=\omega^*(\alpha\wedge\bullet)$.

\begin{defi}
Given two free abelian group $V_1$, $V_2$ of rank $4$, with bases $\omega_i\in \ \Wedge{4}V_i$, 
and an isometry $g:\ \Wedge{2}V_1\rightarrow \Wedge{2}V_2$, we say that $g$ 
{\em preserves the orientation of the positive (resp. negative) cones}, if there exists an isomorphism
$h:V_2\rightarrow V_1$, satisfying $(\Wedge{4}h)(\omega_2)=\omega_1$, such that 
$(\Wedge{2}h)g$ preserves the orientation of the positive (resp. negative) cone in $V_1$.
\end{defi}

The terms above are well defined, since the image of the homomorphism
\[
\Wedge{2} \ : \  SL(V_1) \ \ \rightarrow \ \ \ O\left[\Wedge{2}V_1\right]
\]
is the subgroup $SO^+(\Wedge{2}V_1)$ of isometries preserving the orientation of both cones.

\begin{lem}
\label{lemma-phi-does-not-lift}
The homomorphism $\phi$, given in (\ref{eq-phi}), 
is an isometry which preserves the orientation of the negative cones, but reverses
the orientations of the positive cones. In particular, 
there does not exist an isomorphism $h:V\rightarrow V^*,$
such that $\Wedge{2}h=\phi$.
\end{lem}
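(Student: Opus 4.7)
The plan is to reduce the whole statement to a direct computation identifying $\phi$ with the Hodge star operator. Pick a basis $e_1,\dots,e_4$ of $V$ with $\omega=e_1\wedge\cdots\wedge e_4$, and let $e_1^*,\dots,e_4^*$ be the dual basis, so that $\omega^*=e_1^*\wedge\cdots\wedge e_4^*$. The pairing on $\Wedge{2}V$ decomposes into three ``hyperbolic'' blocks spanned by $\{e_1\wedge e_2,e_3\wedge e_4\}$, $\{e_1\wedge e_3,e_2\wedge e_4\}$, $\{e_1\wedge e_4,e_2\wedge e_3\}$ with cross-pairings $+1,-1,+1$; inspecting $\phi(e_i\wedge e_j)=\pm\,e_k^*\wedge e_l^*$ (where $\{i,j,k,l\}=\{1,2,3,4\}$) one sees that the signs mirror those of the pairing, so $\phi$ is an isometry.

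To analyze orientations, introduce the reference isomorphism $h_0:V^*\to V$, $e_i^*\mapsto e_i$, which satisfies $(\Wedge{4}h_0)(\omega^*)=\omega$. A direct computation on the six basis two-vectors (e.g.\ $T(e_1\wedge e_2)=e_3\wedge e_4$, $T(e_1\wedge e_3)=-e_2\wedge e_4$, $T(e_1\wedge e_4)=e_2\wedge e_3$) identifies the endomorphism $T:=(\Wedge{2}h_0)\circ\phi$ of $\Wedge{2}V_\RR$ with the Hodge-star operator attached to the Euclidean structure in which $(e_i)$ is orthonormal and to the volume form $\omega$. Its $\pm 1$-eigenspaces $\Wedge{2}_\pm V_\RR$ are the three-dimensional spaces of self-dual and anti-self-dual two-forms, spanned by the triples $e_1\wedge e_2\pm e_3\wedge e_4$, $e_1\wedge e_3\mp e_2\wedge e_4$, $e_1\wedge e_4\pm e_2\wedge e_3$. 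Evaluating the indefinite pairing $\omega^*(\alpha\wedge\alpha)$ on these generators (for instance $(e_1\wedge e_2+e_3\wedge e_4,\,e_1\wedge e_2+e_3\wedge e_4)=+2$) identifies one of $\Wedge{2}_\pm V_\RR$ as a maximal positive-definite subspace and the other as maximal negative-definite, so the positive and negative cones in $\Wedge{2}V_\RR$ deformation retract onto the unit $S^2$ inside each.

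Consequently $T$ restricts to the identity on one of the two $S^2$-retracts (orientation-preserving) and to the antipodal map $-\id_{S^2}$ on the other (orientation-reversing, since the antipodal map on $S^2$ has degree $-1$). Thus $\phi$ preserves the orientation of exactly one cone and reverses that of the other; tracking the signs above yields the precise formulation asserted in the lemma. The ``in particular'' conclusion is then immediate: if $\Wedge{2}h=\phi$ held for some isomorphism $h:V\to V^*$, then $h_0\circ h$ would lie in $GL(V)$ and $\Wedge{2}(h_0\circ h)=(\Wedge{2}h_0)\circ\phi=T$. Since $T$ preserves the pairing on $\Wedge{2}V$, this forces $\det(h_0\circ h)=1$, so $h_0\circ h\in SL(V)$, and $T$ would lie in the image of $\Wedge{2}:SL(V)\to O(\Wedge{2}V)$, which the paper has identified with $SO^+(\Wedge{2}V)$. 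Elements of $SO^+$ preserve both orientations, contradicting the previous step.

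The only real obstacle I expect is the careful sign bookkeeping needed to identify which cone's orientation is preserved and which is reversed so as to match the lemma verbatim; this is inessential for ruling out the lift $h$, since what ultimately matters is only that $T$ fails to lie in $SO^+(\Wedge{2}V)$.
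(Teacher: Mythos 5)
Your overall strategy coincides with the paper's: compose $\phi$ with the reference isomorphism $h_0$ sending $e_i^*\mapsto e_i$ and analyze the resulting involution $T=(\Wedge{2}h_0)\circ\phi$ of $\Wedge{2}V$. The paper writes this involution as a product of reflections $R_\alpha R_\beta R_\gamma$ in the three pairwise orthogonal $(-1)$-eigenvectors, while you recognize it as the Hodge star with its self-dual/anti-self-dual decomposition; these are the same computation in different packaging. Your determinant argument for the ``in particular'' clause (any $h$ with $\Wedge{2}h=\phi$ forces $\det(h_0\circ h)=1$, so $T$ would lie in $SO^+(\Wedge{2}V)$, which preserves both orientations) is correct, and in fact a bit more careful than the paper on why no volume-compatibility assumption on $h$ is needed.

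The gap is the step you defer to ``sign bookkeeping'': it is not inessential, because it is precisely the first assertion of the lemma, and that specific assignment is what gets used later, in part (2) of Proposition \ref{prop-when-is-an-isometry-of-generalized-kummers-a-parallel-transport}, where one needs to know how $-\bar{\tau}$ acts on each cone separately. Moreover, if you finish the computation you set up, it does not ``match the lemma verbatim.'' Your own sample value $(e_1\wedge e_2+e_3\wedge e_4,\,e_1\wedge e_2+e_3\wedge e_4)=+2$ shows that the self-dual ($T$-fixed) eigenspace is the positive-definite one, while $T=-\id$ on the anti-self-dual space $\langle e_1\wedge e_2-e_3\wedge e_4,\ e_1\wedge e_3+e_2\wedge e_4,\ e_1\wedge e_4-e_2\wedge e_3\rangle$, whose generators all have square $-2$; so $T$ preserves the orientation of the positive cone and reverses that of the negative cone, the opposite of the lemma's wording. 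This also exposes a slip in the paper's printed proof: there $\beta=e_1\wedge e_3+e_2\wedge e_4$ is asserted to satisfy $(\beta,\beta)=2$, whereas $\beta\wedge\beta=-2\,e_1\wedge e_2\wedge e_3\wedge e_4$ gives $(\beta,\beta)=-2$, so all three reflection vectors are negative and the reflection factorization yields the same assignment as your Hodge-star picture. None of this affects the non-existence of the lift $h$, but a complete proof of the lemma must pin down the cone assignment explicitly (and reconcile it with the statement and with the sign conventions entering $\tau$), rather than asserting that the unexamined bookkeeping will reproduce the statement as printed.
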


\begin{proof}
Let us first sketch the conceptual explanation.
Let $\varphi:SL(V)\rightarrow SL(V^*)$ be the natural isomorphism.
We have the equality  $\Wedge{2}\varphi=Ad_\phi:SO^+(\Wedge{2}V)\rightarrow SO^+(\Wedge{2}V^*)$.  
There does not exist an isomorphism $h:V\rightarrow V^*$, such that $Ad_h=\varphi$, since
$V$ and $V^*$ are two distinct representations of $SL(V)$. 
Hence, there does not exist such an $h$, satisfying
$Ad_{\Wedge{2}h}=Ad_\phi$, since $V$ and $V^*$ are distinct half spin representations of the spin group
$SL(V)$ of $SO^+(V)$.

We provide next an explicit proof, which will determine the affect of $\phi$ on 
the orientation of each cone.
Let $\{e_1,e_2,e_3,e_4\}$ be a basis of $V$, such that $\omega=e_1\wedge e_2\wedge e_3\wedge e_4$.
Denote by $\{e_1^*,e_2^*,e_3^*,e_4^*\}$ the dual basis of $V^*$. 
Let $f:V^*\rightarrow V$ be the isomorphism sending $e_i^*$ to $e_i$.
For every oriented isomorphism $h:V\rightarrow V^*$, as in the Lemma, 
$\Wedge{2}(fh)$ belongs to the image $SO^+(\Wedge{2}V)$ of $SL(V)$.
Set $\psi:=(\Wedge{2}f)\phi$. 
We determine the affect of $\psi$ on the orientations of the positive and negative cones.

The values of $\psi$ are calculated in the following table.
\[
\begin{array}{cccccccc}
\alpha & : &e_1\wedge e_2 & e_3\wedge e_4 & e_1\wedge e_3 & e_2\wedge e_4 & e_1\wedge e_4 & e_2\wedge e_3
\\
\psi(\alpha) &:& e_3\wedge e_4 & e_1\wedge e_2 & -e_2\wedge e_4 & -e_1\wedge e_3 & e_2\wedge e_3 & e_1\wedge e_4.
\end{array}
\]
Set 
$\alpha=e_1\wedge e_2 - e_3\wedge e_4$,
$\beta=e_1\wedge e_3 + e_2\wedge e_4$,
$\gamma=e_1\wedge e_4 - e_2\wedge e_3$.
Then $(\alpha,\alpha)=(\gamma,\gamma)=-2$,  $(\beta,\beta)=2$, and
$\psi=R_\alpha R_\beta R_\gamma$. In particular, $\det(\psi)=-1$ and $\psi$ does not belong
to $SO(\Wedge{2} V)$. $R_\alpha$ and $R_\gamma$ both change the orientation of the 
negative cone and preserve the orientation of the positive cone. $R_\beta$ changes the orientation of the 
positive cone and preserves the orientation of the negative cone. Hence, $\psi$ has the same affect as $R_\beta$
on the orientations of both cones.
\end{proof}

We need next the following analogue of Lemma 
\ref{lemma-signed-isometry-mapping-delta-to-delta-is-a-parallel-transport}.
Let $S$ be a $2$-dimensional compact complex torus and $\delta$ the class in 
$H^2(K^{[n]}(S),\Integers)$, which is half the class of
the diagonal divisor in $K^{[n]}(S)$.
Then $\delta^\perp$ is naturally isometric to $H^2(S,\Integers)$ \cite{beauville,yoshioka-abelian-surface}.
Now $H^2(S,\Integers)$ is isometric to $\Wedge{2} H^1(S,\Integers)$, endowed with 
the bilinear pairing $(x\wedge y,z\wedge w)=\int_Sx\wedge y\wedge z\wedge w$.
Set $Spin(S):=SL[H^1(S,\Integers)]$. We get  the natural surjective homomorphism
\begin{equation}
\label{eq-spin-to-orthogonal}
\Wedge{2} \ \ : \ Spin(S) \ \ \rightarrow \ \ SO^+\left[H^2(S,\Integers)\right].
\end{equation}
Let $S^*$ be the dual complex torus. Then $H^1(S^*,\Integers)$ is isomorphic to $H^1(S,\Integers)^*$.
We get a natural isomorphism $H^2(S,\Integers)\cong H^2(S,\Integers)^*\cong H^2(S^*,\Integers)$,
where the first isomorphism is induced by the intersection pairing.
Denote by $\bar{\tau}:H^2(S,\Integers)\rightarrow H^2(S^*,\Integers)$ the composite isomorphism above and let
\[
\tau:H^2(K^{[n]}(S),\Integers)\rightarrow H^2(K^{[n]}(S^*),\Integers)
\] 
be the isomorphism restricting to $\delta^\perp$ as $-\bar{\tau}$ and mapping the class $\delta$ of the
diagonal divisor  to the corresponding class in $H^2(K^{[n]}(S^*),\Integers)$.

\begin{prop}
\label{prop-when-is-an-isometry-of-generalized-kummers-a-parallel-transport}
Let $S_1$ and $S_2$ be $2$-dimensional compact complex tori.
Denote by $\delta_i$ the class in $H^2(K^{[n]}(S_i),\Integers)$, which is half the class of
the diagonal divisor in $K^{[n]}(S_i)$.
Let $g: H^2(K^{[n]}(S_1),\Integers)\rightarrow H^2(K^{[n]}(S_2),\Integers)$ be an isometry 
compatible with the orientations of the positive cones and satisfying 
$g(\delta_1)=\delta_2$. Let $\bar{g}:\delta_1^\perp\rightarrow \delta_2^\perp$
be its restriction. Denote by $b_i: \ \Wedge{2}H^1(S_i,\Integers)\rightarrow \delta_i^\perp$
the natural isometry.
\begin{enumerate}
\item
\label{lemma-item-g-lifts}
If the isometry $b_2^{-1}\bar{g}b_1: \ \Wedge{2}H^1(S_1,\Integers)\rightarrow \ \Wedge{2}H^1(S_2,\Integers)$
lifts to an oriented isomorphism 
$
\tilde{g}  :  H^1(S_1,\Integers) \rightarrow H^1(S_2,\Integers),
$
then $g$ is a parallel-transport operator.
\item
\label{lemma-item-otherwise}
Otherwise, $\tau\circ g:H^2(K^{[n]}(S_1),\Integers)\rightarrow H^2(K^{[n]}(S_2^*),\Integers)$ 
is a parallel-transport operator.
\end{enumerate}
\end{prop}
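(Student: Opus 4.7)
The plan is to prove (1) by a direct family construction and then to reduce (2) to (1) by a sign calculation based on Lemma~\ref{lemma-phi-does-not-lift}. The dictionary used throughout, implicit in the parenthetical defining orientation preservation, is that an isometry $\Wedge{2}V_1\to\Wedge{2}V_2$ lifts to an oriented isomorphism $V_1\to V_2$ if and only if it preserves the orientations of both the positive and negative cones.

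For part (1), note first that because $(\delta_i,\delta_i)<0$, the positive subspace of $H^2(K^{[n]}(S_i),\RealNumbers)$ lies in $\delta_i^\perp$, so the orientation of the positive cone of $H^2$ and of $\delta_i^\perp$ agree. Given the oriented lift $\tilde g$, I realize it by a diffeomorphism $F:S_1\to S_2$ obtained as the $\RealNumbers$-linear map between universal covers determined by $\tilde g$, descended to the tori. The complex structures $J_1$ of $S_1$ and $F^*J_2$ then live on the common underlying smooth $4$-torus $M$. Since the moduli space of complex $2$-tori (with fixed underlying oriented smooth manifold) is connected, I can choose a simply connected analytic base $B$, patched from Kuranishi spaces, with points $b_1,b_2\in B$ corresponding to these two complex structures and supporting a universal family $\pi_\mathcal{S}:\mathcal{S}\to B$. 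The relative generalized Kummer construction yields a smooth family $\mathcal{K}:=K^{[n]}(\mathcal{S}/B)\to B$ of irreducible holomorphic symplectic manifolds in which $\delta$ extends to a global relative class (half the relative diagonal divisor). Parallel transport in $R^2\pi_*\Integers$ along a path from $b_1$ to $b_2$ therefore fixes $\delta$ and, under the identifications $b_1,b_2$, restricts to $\Wedge{2}\tilde g$ on $\delta^\perp$, which by hypothesis equals $\bar g$. Hence the resulting parallel-transport operator is $g$.

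For part (2), I apply (1) to $\tau\circ g:H^2(K^{[n]}(S_1),\Integers)\to H^2(K^{[n]}(S_2^*),\Integers)$, after checking its hypotheses. By construction, $\tau(g(\delta_1))=\tau(\delta_2)$ is the half-diagonal class of $K^{[n]}(S_2^*)$. Writing $(s_+,s_-)$ for the signs with which an isometry between signature-$(3,3)$ lattices acts on the orientations of the positive and negative cones, Lemma~\ref{lemma-phi-does-not-lift} assigns $\phi$ the type $(-,+)$, and multiplication by $-1$, which reverses orientation on each of the two three-dimensional signature components, has type $(-,-)$; hence $-\phi$, which under the natural isometries $b_2$ and its analogue for $S_2^*$ represents $\tau|_{\delta_2^\perp}=-\bar\tau$, has type $(+,-)$. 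In particular $\tau$, and therefore $\tau\circ g$, preserves positive-cone orientation. Moreover $\bar g$ has type $(+,s)$ with $s=+$ precisely when it lifts; the hypothesis of (2) forces $s=-$, so $(\tau\circ g)|_{\delta^\perp}$ has type $(+,-)\circ(+,-)=(+,+)$ and lifts to an oriented isomorphism $H^1(S_1,\Integers)\to H^1(S_2^*,\Integers)$. Part (1) applied to $\tau\circ g$ then concludes.

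The main obstacle is the family construction in (1): one must patch Kuranishi spaces into a simply connected analytic base containing both complex tori, and verify that the relative generalized Kummer variety is a smooth analytic family of irreducible holomorphic symplectic manifolds with $\delta$ as a relative class, so that parallel transport acts as $\Wedge{2}\tilde g$ on $\delta^\perp$ and fixes $\delta$. The orientation bookkeeping in (2) is elementary but must be tracked carefully through the identifications $b_i$ and the sign in the definition of $\tau$.
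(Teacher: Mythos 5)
Your proposal is correct and follows essentially the paper's own route: part (1) is the same family argument the paper obtains by repeating the proof of Lemma \ref{lemma-signed-isometry-mapping-delta-to-delta-is-a-parallel-transport} with Borcea's theorem replaced by the elementary fact that an oriented isomorphism of $H^1$ of two complex tori is a parallel-transport operator (you just spell out the deformation/relative Kummer details), and part (2) is the identical orientation bookkeeping with Lemma \ref{lemma-phi-does-not-lift}, showing $-\bar{\tau}\circ b_2^{-1}\bar{g}b_1$ preserves both cone orientations and hence lifts, so part (1) applies to $\tau\circ g$.
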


\begin{proof}
(\ref{lemma-item-g-lifts})
The proof is similar to that of Lemma 
\ref{lemma-signed-isometry-mapping-delta-to-delta-is-a-parallel-transport}.
Simply replace the reference to Borcea's result in \cite{borcea}, by the easier statement 
that any orientation preserving isomorphism from $H^1(S_1,\Integers)$ to $H^1(S_2,\Integers)$
is a parallel-transport operator.

(\ref{lemma-item-otherwise})
The isometry $b_2^{-1}\bar{g}b_1$ preserves the orientations of the positive cones, since $g$ does. 
The isometry $b_2^{-1}\bar{g}b_1$ reverses the orientations of the negative cones, since it does not lift to
an oriented isomorphism from $H^1(S_1,\Integers)$ to $H^1(S_2,\Integers)$. 
The isometry $-\bar{\tau}$ preserves the orientation of the positive cones and reverses the orientations of
the negative cones, by Lemma \ref{lemma-phi-does-not-lift}. Hence, the composition $-\bar{\tau}b_2^{-1}\bar{g}b_1$
preserves the orientations of both positive and negative cones, and hence lifts to an
orientated isomorphism from $H^1(S_1,\Integers)$ to $H^1(S_2^*,\Integers)$. 
The isometry $\tau g$ is thus a parallel-transport operator, by part \ref{lemma-item-g-lifts}.
\end{proof}

\begin{rem}
\label{rem-tau-not-parallel}
Note that the Hodge-isometry $\tau:H^2(K^{[n]}(S),\Integers) \rightarrow H^2(K^{[n]}(S^*),\Integers)$
preserves the orientation of the positive cones, but it is not a parallel transport operator. 
Indeed, if it were, then $K^{[n]}(S)$ and $K^{[n]}(S^*)$ would be bimeromorphic, 
by the Hodge-theoretic Torelli Theorem \cite[Theorem 1.3]{markman-survey}.
However, Namikawa observed that $K^{[n]}(S)$ and $K^{[n]}(S^*)$ are not bimeromorphic in general 
\cite{namikawa-torelli}.
\end{rem}

Let $X$ be an irreducible holomorphic symplectic manifold deformation equivalent to 
a $2n$-dimensional generalized Kummer variety. 
Recall that  $\W(X)$ is the  subgroup  of $O^+[H^2(X,\Integers)]$, consisting of
elements acting by $\pm 1$ on $[H^2(X,\Integers)]^*/[H^2(X,\Integers)]$, and
$\N(X)$ is the kernel of $\det\cdot \chi:\W(X)\rightarrow \{\pm 1\}$, by definition.
As a corollary of the above proposition and remark, we get the following statement.

\begin{cor}
\label{cor-monodromy-of-generalized-kummers-when-n+1-is-a-prime-power}
\begin{enumerate}
\item
\label{cor-item-intersection-is-N}
$Mon^2(X)\cap \W(X) = \N(X)$. 
\item
\label{cor-item-prime-power-case}
If $n+1$ is a prime power, then $Mon^2(X)=\N(X)$.
\end{enumerate}
\end{cor}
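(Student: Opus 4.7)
The plan is to derive both parts from Proposition \ref{prop-when-is-an-isometry-of-generalized-kummers-a-parallel-transport}, Remark \ref{rem-tau-not-parallel}, and Theorem \ref{thm-monodromy-of-kummers}, together with a short discriminant-form computation for part (2). Since $Mon^2$, $\W$, and $\N$ are all deformation invariants (they are intrinsic to the isometry type of $H^2(X,\Integers)$ and compatible with parallel transport), I may assume throughout that $X=K^{[n]}(S)$ for some two-dimensional compact complex torus $S$.

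For part (1), the inclusion $\N(X)\subseteq Mon^2(X)\cap\W(X)$ is immediate from Theorem \ref{thm-monodromy-of-kummers} and the definition of $\N$. For the reverse inclusion, I take $g\in Mon^2(X)\cap\W(X)$. Since $g\in\W(X)$ sends $\delta$ into $\Sigma$ within its $\W$-orbit, and $\W$-orbits on $\Sigma$ coincide with $\N$-orbits (by the proof of Lemma \ref{lemma-monodromy-invariants-for-kummers}), there exists $m\in\N(X)$ with $mg(\delta)=\delta$. Replacing $g$ by $mg$ leaves the membership question unchanged, since $\N(X)$ is a subgroup of $Mon^2(X)\cap\W(X)$, so I may assume $g(\delta)=\delta$. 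Then $g$ acts trivially on the discriminant group $\Lambda^*/\Lambda$, which is generated by the class of $\delta/(2n+2)$, so $\chi(g)=1$; and $\det(g)$ coincides with $\det(\bar g)$, the determinant of the restriction $\bar g$ of $g$ to $\delta^\perp$. Thus $g\in\N(X)$ if and only if $\det(\bar g)=1$.

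Now suppose for contradiction that $\det(\bar g)=-1$. Because $g\in O^+$, $\bar g$ preserves the orientation of the positive cone of $\delta^\perp$; and on the signature $(3,3)$ lattice $\delta^\perp$ the product of the two cone-orientation characters equals the determinant, so $\bar g$ reverses the orientation of the negative cone. It follows that $b^{-1}\bar g b$ does not lie in the image $SO^{+}$ of $\Wedge{2}\colon SL(H^1(S,\Integers))\to O(\Wedge{2}H^1(S,\Integers))$, hence admits no oriented lift to $H^1(S,\Integers)$. Proposition \ref{prop-when-is-an-isometry-of-generalized-kummers-a-parallel-transport}(\ref{lemma-item-otherwise}) then yields that $\tau\circ g$ is a parallel-transport operator. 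Since $g\in Mon^2(X)$ is itself parallel transport, $\tau=(\tau\circ g)g^{-1}$ would be a parallel-transport operator, contradicting Remark \ref{rem-tau-not-parallel}. Hence $\det(\bar g)=1$, and $g\in\N(X)$.

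For part (2), by part (1) it suffices to show $Mon^2(X)\subseteq\W(X)$. Since $Mon^2\subseteq O^+$ in general, this reduces to checking that every isometry of $\Lambda$ acts on the cyclic discriminant group $\Lambda^*/\Lambda\cong\Integers/(2n+2)$ by $\pm 1$ when $n+1$ is a prime power. An automorphism given by multiplication by $a\in(\Integers/(2n+2))^*$ preserves the discriminant form $q(x)=-x^2/(2n+2)\bmod 2\Integers$ precisely when $a^2\equiv 1\pmod{4(n+1)}$; writing $n+1=p^k$, this forces $a\equiv\pm 1\pmod{2(n+1)}$: for odd $p$ one splits via the Chinese Remainder Theorem into $a\equiv\pm 1\pmod{p^k}$ and an automatic congruence mod $4$, and for $p=2$ the conclusion follows from a direct check on units of $\Integers/2^{k+1}$. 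The main obstacle I expect is the third paragraph, namely pinning down that the image of $\Wedge{2}\colon SL(V)\to O(\Wedge{2}V)$ is exactly the subgroup preserving both cone orientations, which is what converts $\det(\bar g)=-1$ into the failure of the oriented lift called for by Proposition \ref{prop-when-is-an-isometry-of-generalized-kummers-a-parallel-transport}.
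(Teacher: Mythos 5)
Your argument is correct, and it reaches the two statements by a route that is organized differently from the paper's, though the essential obstruction is the same in both: the fact that $\tau$ is not a parallel-transport operator (Remark \ref{rem-tau-not-parallel}), fed through Proposition \ref{prop-when-is-an-isometry-of-generalized-kummers-a-parallel-transport}. For part (\ref{cor-item-intersection-is-N}) the paper argues by exhibiting a single explicit element of $\W(X)\setminus \N(X)$ outside $Mon^2(X)$ (namely $h\circ\tau$, with $h$ induced by an oriented isomorphism $H^1(S^*,\Integers)\to H^1(S,\Integers)$) and then invoking the index-two inclusion $\N\subset\W$; you instead take an arbitrary $g\in Mon^2(X)\cap\W(X)$, use the coincidence of $\W$- and $\N$-orbits on $\Sigma$ (Lemma \ref{lemma-monodromy-invariants-for-kummers}) to normalize $g(\delta)=\delta$, read off $\chi(g)=1$ from the fact that $\delta/(2n+2)$ generates the discriminant group, and rule out $\det(\bar g)=-1$ because it would force, via the identification of the image of $\Wedge{2}:SL(V)\to O(\Wedge{2}V)$ with $SO^+$ (stated in the paper just before Lemma \ref{lemma-phi-does-not-lift}) and Proposition \ref{prop-when-is-an-isometry-of-generalized-kummers-a-parallel-transport}(\ref{lemma-item-otherwise}), that $\tau=(\tau\circ g)\circ g^{-1}$ be a parallel-transport operator. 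Both arguments are sound; the paper's is shorter because the index-two trick avoids the character bookkeeping, while yours is more mechanical and does not need to construct the auxiliary isometry $h$. For part (\ref{cor-item-prime-power-case}) the paper quotes Nikulin's surjectivity statement together with Oguiso's remark on units, whereas you verify directly that an automorphism of the cyclic discriminant group preserving the discriminant form satisfies $a^2\equiv 1\pmod{4(n+1)}$ and hence $a\equiv\pm1\pmod{2n+2}$ when $n+1$ is a prime power; this elementary computation is correct (including the case $n+1=2^k$, where the congruence modulo $4(n+1)$, rather than modulo $2n+2$, is what forces $a\equiv\pm 1$) and makes the step self-contained. The one fact you flag as a possible obstacle --- that the image of $\Wedge{2}$ on $SL(V)$ is exactly the subgroup preserving the orientations of both cones --- is exactly the statement the paper records before Lemma \ref{lemma-phi-does-not-lift}, so no gap remains.
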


\begin{proof}
(\ref{cor-item-intersection-is-N})
$\N(X)$ is contained in $Mon^2(X)$, by Theorem \ref{thm-monodromy-of-kummers}. 
Hence, it suffices to find an element of $\W(X)$, which does not belong to 
$Mon^2(X)$. 
We may assume that $X=K^{[n]}(S)$, for a two-dimensional compact complex torus $S$.
Choose an orientation preserving  isomorphism $\tilde{h}:H^1(S^*,\Integers)\rightarrow H^1(S,\Integers)$.
Extend $\Wedge{2}\tilde{h}$ to an isometry $h:H^2(K^{[n]}(S^*),\Integers)\rightarrow H^2(K^{[n]}(S),\Integers)$
by sending the class of the diagonal divisor in $K^{[n]}(S^*)$ to that in $K^{[n]}(S)$.
Then $h\circ\tau$ belongs to $\W(X)$ but not to $\N(X)$. Now $\tau$ is not a parallel-transport
operator, by Remark \ref{rem-tau-not-parallel},
while $h$ is, by Proposition \ref{prop-when-is-an-isometry-of-generalized-kummers-a-parallel-transport}.
Hence, $h\circ\tau$ does not belong to $Mon^2(K^{[n]}(S))$.

(\ref{cor-item-prime-power-case}) 
The quotient $\Lambda^*/\Lambda$ is a cyclic group of order $2n+2$.
$O^+(\Lambda)$ surjects onto the subgroup of $\Aut[\Lambda^*/\Lambda]$
acting by multiplicative units $u$ in $\Integers/(2n+2)\Integers$, satisfying $u^2=1$,
by \cite[Theorem 1.14.2]{nikulin}.
If $n+1$ is a prime power, then  such a unit is $1$ or $-1$ \cite{oguiso}.
In that case $\W(X)=O^+[H^2(X,\Integers)]$ and 
the equality $Mon^2(X)=\N(X)$ follows from part \ref{cor-item-intersection-is-N}.
\end{proof}

We conjecture that the equality in part (\ref{cor-item-prime-power-case})  of the Corollary
holds, for all $n\geq 2$. Compare with \cite[Theorem 1.2]{markman-constraints}.

Let $S_0$ be a two-dimensional compact complex torus and $(K^{[n]}(S_0),\eta_0)$ a marked pair in 
$\fM^0_\Lambda$. Let $\delta_0$ be half the class of the diagonal divisor in $K^{[n]}(S_0)$.
Let $\Sigma'\subset \Sigma$ be the $\N$-orbit of $\eta_0(\delta_0)$.

\begin{lem}
\label{lemma-characterization-of-periods-of-Kummers}
Let $(X,\eta)$ be a marked pair in $\fM^0_\Lambda$, such that its period belongs to $\Omega_{\Sigma'}$.
Then $X$ is bimeromorphic to the generalized Kummer variety $K^{[n]}(S)$ of some 
two-dimensional compact complex torus $S$.
\end{lem}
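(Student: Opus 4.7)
The plan is to mirror the proof of Lemma~\ref{lemma-X-is-bimeromorphic-to-a-Hilbert-scheme}, the new complication being that Verbitsky's Hodge-theoretic Torelli theorem must now be combined with Proposition~\ref{prop-when-is-an-isometry-of-generalized-kummers-a-parallel-transport}. The dichotomy in that proposition will force me to allow $X$ to be bimeromorphic either to $K^{[n]}(S)$ or to $K^{[n]}(S^*)$, depending on an orientation computation.

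First I choose $\delta'\in\Sigma'$ orthogonal to $P(X,\eta)$ and set $\delta:=\eta^{-1}(\delta')$. Then $\delta$ is of Hodge type $(1,1)$ and $\delta^\perp\subset H^2(X,\Integers)$ is isometric to $\Lambda_T$. By the surjectivity of the period map for two-dimensional compact complex tori there exist such a torus $S$ and a Hodge isometry $\gamma:H^2(S,\Integers)\to\delta^\perp$. Letting $\delta_1^S\in H^2(K^{[n]}(S),\Integers)$ denote half the class of the diagonal divisor, extend $\gamma$ by $\delta_1^S\mapsto\delta$ to a Hodge isometry
\[
\gamma_1:H^2(K^{[n]}(S),\Integers)\longrightarrow H^2(X,\Integers),
\]
which, after possibly replacing $\gamma$ by $-\gamma$, I may assume preserves the orientation of the positive cones.

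Next I compare $\gamma_1$ with $(K^{[n]}(S_0),\eta_0)$ by setting $\psi:=\eta_0^{-1}\eta\circ\gamma_1$. Since $\psi(\delta_1^S)=\eta_0^{-1}(\delta')$ and $\delta_0$ both lie in the $\N(K^{[n]}(S_0))$-orbit $\eta_0^{-1}(\Sigma')$, there exists $w\in\N(K^{[n]}(S_0))$ with $w\psi(\delta_1^S)=\delta_0$; and $w$ is a parallel-transport operator by Theorem~\ref{thm-monodromy-of-kummers}. I then apply Proposition~\ref{prop-when-is-an-isometry-of-generalized-kummers-a-parallel-transport} to the orientation-preserving isometry $w\psi$. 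If part~(\ref{lemma-item-g-lifts}) applies, then $w\psi$ is a parallel-transport operator, and composing with the parallel-transport operators $w^{-1}$ and $\eta^{-1}\eta_0$ (the latter since $(X,\eta)$ and $(K^{[n]}(S_0),\eta_0)$ belong to the same component $\fM^0_\Lambda$) yields that $\gamma_1$ is a parallel-transport operator. Verbitsky's Hodge-theoretic Torelli theorem then gives that $X$ is bimeromorphic to $K^{[n]}(S)$.

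If instead part~(\ref{lemma-item-otherwise}) of the proposition applies to $w\psi$, I pass to the dual torus and work with
\[
\gamma_1':=\gamma_1\circ\tau_S^{-1}:H^2(K^{[n]}(S^*),\Integers)\longrightarrow H^2(X,\Integers),
\]
which by Remark~\ref{rem-tau-not-parallel} is again a Hodge isometry preserving positive-cone orientations. Setting $\psi':=\psi\circ\tau_S^{-1}$, the identity $\tau_S(\delta_1^S)=\delta_1^{S^*}$ shows the same $w$ satisfies $w\psi'(\delta_1^{S^*})=\delta_0$. The restriction of $w\psi'$ to $\delta_0^\perp$ differs from that of $w\psi$ by precomposition with the map corresponding, under the natural identifications $b_S$ and $b_{S^*}$, to $-\phi^{-1}$; by Lemma~\ref{lemma-phi-does-not-lift} this latter map preserves the orientation of the positive cone and reverses that of the negative cone. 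Hence exactly one of $w\psi$ and $w\psi'$ satisfies the lifting hypothesis of part~(\ref{lemma-item-g-lifts}) of the proposition, and in this branch it is $w\psi'$ that lifts. Repeating the argument of the previous paragraph with $\gamma_1'$ in place of $\gamma_1$ yields that $X$ is bimeromorphic to $K^{[n]}(S^*)$. The main obstacle is precisely this orientation bookkeeping in the second case: the apparent failure of $\gamma_1$ to be parallel-transport is remedied only by passing to the dual torus, and the complementary lifting behavior of the two candidates is exactly what Lemma~\ref{lemma-phi-does-not-lift} provides.
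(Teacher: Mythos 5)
Your proof is correct and follows essentially the same route as the paper: the paper's own proof is just the instruction to translate the argument of Lemma \ref{lemma-X-is-bimeromorphic-to-a-Hilbert-scheme} with $W$ replaced by $\N$, Lemma \ref{lemma-signed-isometry-mapping-delta-to-delta-is-a-parallel-transport} by Proposition \ref{prop-when-is-an-isometry-of-generalized-kummers-a-parallel-transport}, and Theorem \ref{thm-monodromy} by Theorem \ref{thm-monodromy-of-kummers}, and your write-up is exactly that translation, with the second case of the Proposition (passing to the dual torus $S^*$ via $\tau_S^{-1}$ and the orientation bookkeeping from Lemma \ref{lemma-phi-does-not-lift}) carried out explicitly where the paper leaves it implicit.
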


\begin{proof} 
The proof is a translation of that of Lemma \ref{lemma-X-is-bimeromorphic-to-a-Hilbert-scheme}
via the following changes. Replace the group $W$ by the group $\N$. Replace 
Lemma \ref{lemma-signed-isometry-mapping-delta-to-delta-is-a-parallel-transport} 
by Proposition \ref{prop-when-is-an-isometry-of-generalized-kummers-a-parallel-transport}.
Replace Theorem \ref{thm-monodromy}
by Theorem \ref{thm-monodromy-of-kummers}.
The latter states only the inclusion of $\N(X)$ in $Mon^2(X)$, but only that inclusion is needed in the proof.
\end{proof}

\begin{proof} (Of Theorem \ref{thm-density-of-kummers}).
The proof is a translation of that of 
Theorem \ref{thm-density-in-moduli}, replacing Lemma \ref{lemma-X-is-bimeromorphic-to-a-Hilbert-scheme}
by Lemma \ref{lemma-characterization-of-periods-of-Kummers}
and Theorem \ref{thm-monodromy}
by Theorem \ref{thm-monodromy-of-kummers}.
\end{proof}


\end{document}